\documentclass[11pt,a4paper]{article}

\usepackage{a4wide}
\usepackage{amsmath,amsthm,amssymb,amsfonts}
\usepackage{hyperref}
\usepackage{enumerate}
\usepackage{enumitem}
\usepackage{bm}
\usepackage[usenames]{color}
\usepackage[normalem]{ulem}
\usepackage{algorithm}
\usepackage{algorithmic}
\usepackage{amsmath}

\usepackage{diagbox}
\usepackage{subcaption}
\usepackage{xcolor}

\usepackage{graphicx}
\graphicspath{{./figures/}}

\DeclareMathOperator*{\argmin}{arg\,min}
\newtheorem{theorem}{Theorem}[section]
\newtheorem{lemma}[theorem]{Lemma}
\newtheorem{proposition}[theorem]{Proposition}
\newtheorem{corollary}[theorem]{Corollary}

\theoremstyle{definition}

\newtheorem{remark}[theorem]{Remark}

\definecolor{darkgreen}{rgb}{0.0,0.5,0.0}
\definecolor{darkblue}{rgb}{0.0,0.0,0.3}
\definecolor{nicosred}{rgb}{0.65,0.1,0.1}
\definecolor{light-gray}{gray}{0.6}
\definecolor{really-light-gray}{gray}{0.8}

\newcommand{\e}{\ensuremath{\varepsilon}}

\newcommand{\R}{\mathbb{R}}

\newcommand{\db}[1]{{#1}}

\newcommand{\mbt}[1]{{#1}}

\usepackage{authblk}

\makeatletter
\def\@fnsymbol#1{\ensuremath{%
    \ifcase#1\or 1\or 2\or 3\or 4\or 5\or 6\or 7\or 8\or 9%
    \else\@ctrerr\fi}}
\makeatother

\title{Polynomial diagrams for microstructure modelling}

\author{David P. Bourne\thanks{\texttt{d.bourne@hw.ac.uk}}}
\affil{Maxwell Institute for Mathematical Sciences and Department of Mathematics,\\ Heriot-Watt University, Edinburgh, EH14 4AS, UK}

\author{Maciej Buze\thanks{\texttt{m.buze@lancaster.ac.uk}}}
\affil{MARS: Mathematics for AI in Real-world Systems, School of Mathematical Sciences,\\ Lancaster University, Lancaster, LA1 4YF, UK}

\author{Thomas Gallou\"et\thanks{\texttt{thomas.gallouet@inria.fr}}}
\affil{Universit\'e Paris-Saclay, CNRS, Inria, ParMA Team, \\Laboratoire de Math\'ematiques d'Orsay, 91405 Orsay, France}

\author{Quentin M\'erigot\thanks{\texttt{quentin.merigot@universite-paris-saclay.fr}}}
\affil{Universit\'e Paris-Saclay, CNRS, Inria,\\ Laboratoire de Math\'ematiques d'Orsay, 91405 Orsay, France}
\affil{DMA, \'Ecole normale sup\'erieure, Universit\'e PSL, CNRS, 75005 Paris, France}

\begin{document}
%\maketitle

{%
  \setlength{\parskip}{-2pt}%
  \setlength{\baselineskip}{0pt}%  adjust to taste
  \maketitle
}
%\vspace*{-5pt}
%\begin{center}
%\emph{Dedicated to Robin Knops.}
%\end{center}
%\vspace{5pt}

\begin{abstract}
	We formulate a framework of \emph{polynomial diagrams}, which are a generalisation of power diagrams (PDs) and anisotropic power diagrams (APDs) allowing for boundaries between cells to be algebraic curves of a prescribed degree. We show that they arise naturally from rephrasing PDs (APDs) as first-degree (second-degree) instances of \emph{linear parametrised minimisation diagrams}. We also develop an efficient GPU-accelerated framework for fitting polynomial diagrams to image data using Legendre polynomials and by maximising a regularised \emph{concave} objective function adapted from classical logistic regression literature. A largely self-contained analysis of the optimisation algorithm is also provided, including identification of scale and gauge invariances and the limiting objective function as the regularisation parameter vanishes. We apply the algorithm to fit polynomial diagrams to electron backscatter diffraction images of steel.
\end{abstract}
\vspace{5pt}

\section{Introduction}
The modelling of polycrystalline materials is a central challenge in materials science and is of high relevance to industrial applications. 
The macroscopic response of a metal is strongly mediated by its grain structure: grain boundaries obstruct slip and promote strain heterogeneity, while grain size, morphology, and crystallographic texture enter directly into classical strength and hardening laws and into modern microstructure-sensitive constitutive modelling \cite{hall1951deformation,mecking1981kinetics,roters2010overview}.

Experimental microstructure data is commonly obtained from electron backscatter diffraction (EBSD) measurements,
and complementary datasets arise as discretised outputs of grain-growth, crystal-plasticity and FEM simulations \cite{anderson1984computer,moelans2008introduction,roters2010overview}. In all cases the data is inherently discrete, given as pixels/voxels on a set $\Omega$ approximating an ambient domain $\overline\Omega$. It can then be post-processed, e.g., using the open-source software MTEX \cite{MTEX}, resulting in a grain map $G\,\colon\,\Omega \to [N]$ (where $[N] := \{1,\dots,N\}$), which is a partition of $\Omega$  into $N$ grains, where $G(x)=k$ if the point $x$ belongs to grain $k$.

For analysis, storage, and fast geometric queries it is advantageous to replace a pixel/voxel level segmentation by a compact parametric representation whose induced cells approximate the observed grains.
Classical choices include Voronoi diagrams \cite{Aurenhammer1991}, power diagrams \cite{FWZL04, LLLFP11,QR18,DepriesterKubler2019,BKRS20,KSSB20,BPR23}, and, more recently, anisotropic power diagrams  \cite{FWZL04,KuhnSteinhauser08,LLLFP11,QR18}, which remain a topic of active research \cite{altendorf20143d,ABGLP15,VBWBSJ16,SWKKCS18,TR18,PFWKS21,AFGK23,AFGK23B, jung2024analytical} 
--
see also \cite{krause2026generating} for recent work on extending these approaches to account for crystallographic orientation of grains and \cite{plateau2025search} on fast computation of Apollonius diagrams on a GPU.
These diagrams have geometrically interpretable parameters, but their expressive power is limited when experimental boundaries depart from the assumed algebraic class. 

In this 
work 
we formulate a common framework for such representations termed \emph{parametrised minimisation diagrams} (PMDs): given a set of functions $\{f_1,\dots,f_N\}$ defined on the ambient space $\overline\Omega$, each point $x \in \Omega$ is \emph{arg-min assigned}, that is, assigned to the index attaining the minimum of $\{f_i(x)\}_{i \in [N]}$. We focus on the parametrised case
\[
f_i(x)=h(\theta_i,x),\qquad h\,\colon\,\R^K \times \overline\Omega \to \R\,
\]
and on fitting $\bm\theta=(\theta_1,\dots,\theta_N) \in \R^{K \times N}$ so that the resulting arg-min assignment matches a given grain map $G$ as much as possible.

Directly maximising pixel/voxel accuracy is, in general, a 
non-smooth nonlinear 
programming problem in $\bm\theta$. This was solved using stochastic optimisation in \cite{SBDWKKS2016}.
For the case of APDs, it was recently shown that this is a high-dimensional linear programming problem \cite{AFGK23}.
To obtain a smooth, concave maximisation problem
we use the well-known $\e$-scale soft assignment based on a log-sum-exp normalisation, leading to the standard multinomial logistic regression problem
\cite{BoydVandenberghe2004,HastieTibshiraniFriedman2009,Murphy2012}. See \cite{PFWKS21} for a recent use of similar ideas in the context of microstructure modelling, where the problem is formulated as a non-convex logistic regression problem (see Remark \ref{rem: related work}). For a recent survey of algorithms for fitting (anisotropic) power diagrams to image data, see \cite{Alpers2025}. The problem of fitting diagrams to \emph{incomplete} data, where the full grain map $G$ is not available and only the volumes and centroids of the grains are known, is studied for example in \cite{PetrichEtAl2019,QR18,bourne2024inverting,BourneGallouetMerigotNatale}. 

A foundational observation for the present work is that for the \emph{linear} class
\[
h_{\theta_i}(x)=\theta_i\cdot\eta(x),\qquad\text{ where }\;\eta\,\colon\,\Omega\to \R^K \;\text{ is some \emph{design function}},
\] 
the resulting objective function is strictly concave after removing a natural gauge invariance (common shifts of all $\theta_i$).
This yields a numerically robust optimisation problem even at large scale, and it allows us to analyse the behaviour of optimisers and near-optimisers, including the $\varepsilon$-scaling, the sharp $\varepsilon\to0$ limit, and the classical non-attainment phenomenon under perfect reconstruction (typically referred to as \emph{linear separation} in the logistic regression literature \cite{albert1984existence,santner1986note}).

To increase expressivity while preserving concavity, we extend the design function $\eta$ beyond the standard low-degree cases. Power diagrams and anisotropic power diagrams are recovered as degree one and two \emph{polynomial diagrams}, and we propose a general polynomial diagram framework with the design function $\eta$ containing all polynomial terms up to a degree $d$. In particular, we advocate rescaling to $[-1,1]^2$ and using a Legendre product basis, which spans the same polynomial space but typically improves the conditioning of the \emph{design matrix} $\eta(\Omega) \in \R^{K \times |\Omega|}$.
We solve the resulting optimisation problem on a
GPU using our Python library \textsc{PyAPD} \cite{PyAPD_paper,PyAPD_github}. We report numerical experiments on (i) synthetic data generated by power/anisotropic power diagrams and (ii) EBSD-derived grain maps of steel, illustrating the practical accuracy--complexity trade-off as the polynomial degree increases.
\paragraph{Outline of the paper.}
In Section~\ref{sec:pmd} we introduce the general framework of parametrised minimisation diagrams, formulate the fitting problem, and explain why the linear parametrisation class yields a concave multinomial logistic objective; standard power diagrams and anisotropic power diagrams are recovered in Section~\ref{sec:rel-std-diagrams} as first- and second-degree instances of this framework. In Section~\ref{sec:higher-d-d} we define polynomial diagrams of arbitrary degree, first in a monomial basis and then in a Legendre basis, and discuss their relation to the classical cases; see in particular Section~\ref{sec:Legendre}. Section~\ref{sec:analysis-linear-param} contains a largely self-contained analysis of the optimisation problem for linear PMDs, including the gradient and Hessian formulas, strict concavity after gauge fixing, the role of the $\varepsilon$-parameter, the $\varepsilon\to 0$ limit, and the existence/non-existence of maximisers depending on whether perfect reconstruction is possible. In Section~\ref{sec:numerics} we present GPU-based numerical experiments, first on synthetic power and anisotropic power diagram data (Section~\ref{sec:recon-apd}) and then on EBSD grain maps of steel (Section~\ref{sec:fit-ebsd}), followed by a discussion of degree selection, initialisation, recovery of physically meaningful parameters, and the practical role of $\varepsilon$ in Section~\ref{sec:discussion}. Finally, Section~\ref{sec:conclusion} summarises the main findings and outlines several directions for future work.
\paragraph{Acknowledgements}
\db{DPB and MB would like
to thank the UK Engineering and Physical Sciences Research Council (EPSRC) for financial
support via the grant EP/V00204X/1.}
MB~is \db{currently} supported by Research England under the Expanding Excellence in England (E3) funding stream, which was awarded to MARS: Mathematics for AI in Real-world Systems in the School of Mathematical Sciences at Lancaster University.\\
%TG is supported by \dots\\
QM is supported by Agence nationale de la recherche, through the PEPR PDE-AI project (ANR-
23-PEIA-0004). 
%\\
The authors thank Tata Steel 
\db{Netherlands} for providing EBSD datasets and Shadeform for providing access to cloud GPU resources.

\section{Parametrised minimisation diagrams}\label{sec:pmd}
\subsection{Setting and notation}\label{sec:setting-notation}
We consider an ambient space $\overline \Omega \subseteq \mathbb{R}^d$. To simplify the presentation, we take $d=2$ and
\[
\overline \Omega := [-1,1]^2,
\]
although our results are not limited to two dimensions. We take a discretisation of $\overline \Omega$ given by a set of pixels
\[
\Omega := \Bigg\{ \left(-1,-1\right) + \frac{1}{M}\left(k_1 - \frac12,\, k_2 - \frac12\right) \,\Bigg|\, k_i \in [2M] \Bigg\},
\]
where $M \in \mathbb{N}_+$ is the pixel resolution parameter and $[2M] := \{1,\dots,2M\}$. To be precise, the points in $\Omega$ are the centres of pixels, but we refer to them as pixels for brevity. Overall we have $|\Omega| = (2M)^2 = 4M^2$ pixels. \mbt{More generally, one may replace $\Omega$ by any finite set of sample points in $\overline\Omega$, for instance an unstructured pixel list.}

A \emph{grain map} is an assignment map $G \colon \Omega \to [N]$\mbt{, where $N > 1$,} which induces a tessellation of $\Omega$ into $N$ clusters, each given by $G_i := G^{-1}(\{i\})$. We assume that $G$ is given.

\mbt{A \emph{minimisation diagram} generated by a function
$\bm{f} = (f_1,\dots,f_N)\colon \overline \Omega \to \mathbb{R}^N$
is a tessellation of $\overline \Omega$ into $N$ cells:
\begin{equation}\label{Li_def}
L_i := \Bigl\{ x \in \overline\Omega \;\Big|\;
f_i(x) < f_j(x)\ \forall j<i,
\quad
f_i(x) \le f_j(x)\ \forall j>i
\Bigr\}.
\end{equation}
That is, ties are broken by assigning $x$ to the smallest index attaining the minimum. In particular,
\[
\overline \Omega = \bigcup_i L_i,\quad L_i \cap L_j = \emptyset \quad \text{for } i\neq j.
\]
}
We restrict our attention to \emph{parametrised minimisation diagrams (PMDs)}, namely
\begin{equation}\label{f-param-class}
f_i(\cdot) = h(\theta_i,\cdot), \quad h\,\colon\, \R^K \times \overline \Omega \to \R, \qquad \text{ (short-hand notation}\; h(\theta_i,\cdot) \equiv h_{\theta_i}(\cdot)).
\end{equation}
Thus $\theta_i \in \R^K$ is a set of parameters defining $f_i$ and $\bm{\theta} = (\theta_1,\dots,\theta_N) \in \R^{K \times N}$ is the overall set of parameters determining the diagram. 

\begin{remark}[Bold font and subscript convention]
	We reserve bold font for highest level collections of objects such as the collection $\bm{f} = (f_1,\dots,f_N)$ of functions generating a minimisation diagram or a collection of parameters $\bm \theta = (\theta_1,\dots,\theta_N)$. Depending on context a generic constituent function will be referred to as either $f$ or $f_i$ and a generic constituent parameter vector as $\theta$ or $\theta_i$. Lower level ordering for vectors $\theta,\theta_i \in \R^K$ will be separated by a comma, leading to notation $\theta_{i,j} \in \R$ and $\theta_{,j} \in \R$. The same convention applies to pixels $\Omega = \{x_1,\dots, x_{|\Omega|}\}$, with $x_{i,j} \in [-1,1]$ and the notation for a generic point $x = (x_{,1},x_{,2}) \in \overline\Omega$, or standard parameters defining (anisotropic) power diagrams (see Section~\ref{sec:rel-std-diagrams}).
\end{remark}
The PMD-induced grain map is
\[
F_{\bm \theta}\colon \overline \Omega \to [N],\quad F_{\bm \theta}(x) := \argmin_{i \in [N]} h_{\theta_i}(x).
\]
If the minimiser is not unique, we select the smallest index \mbt{(c.f., \eqref{Li_def})} and such diagrams are referred to as \emph{ambiguous}. An important special case is a \emph{fully ambiguous} diagram, which arises when $h_{\theta_i}(x) = h_{\theta_j}(x)$ for all $i,j \in [N]$ and all $x \in \overline \Omega$.

The characteristic function of the $i$th cell $L_i$ is given by
\begin{equation}\label{p}
(p_{\bm \theta})_i\colon \overline\Omega \to \{0,1\}, \quad (p_{\bm \theta})_i(x) := \delta(i,F_{\bm \theta}(x)),
\end{equation}
where $\delta$ is the Kronecker delta, so $\delta(k,j) = 1$ if $k=j$ and zero otherwise. For $\varepsilon > 0$, its smeared-out counterpart is
\begin{equation}\label{p-e}
(p_{\bm \theta}^\varepsilon)_i\colon \overline\Omega \to (0,1),\quad (p_{\bm \theta}^\varepsilon)_i(x) := \frac{\exp\left(-\frac1\varepsilon h_{\theta_i}(x)\right)}
{S_{\bm{\theta}}^\varepsilon(x)},
\quad S_{\bm{\theta}}^\varepsilon(x) := \sum_{j \in [N]} \exp\left(-\frac1\varepsilon h_{\theta_j}(x)\right).
\end{equation}
We can think of $(p_{\bm \theta}^\varepsilon)_i(x)$ as being the probability that pixel $x$ belongs to grain $i$. If $\bm \theta$ is such that the resulting diagram is fully ambiguous, then $(p_{\bm \theta}^\varepsilon)_i(x) = \frac{1}{N}$ for all $i \in [N]$ and all $x \in \overline\Omega$.

In what follows, we will investigate several possible choices of parameterisation in \eqref{f-param-class} and discuss optimisation approaches so that the PMD-induced grain map $F_{\bm \theta}$ is as close to the given grain map $G$ as possible.

\subsection{Optimisation approaches}
For now assume that the parameterisation function $h$ in \eqref{f-param-class} is fixed. The most natural measure of the discrepancy between $G$ and $F_{\bm \theta}$ is to simply count the number of pixels being assigned to correct grains, which gives the overall accuracy as 
\begin{equation}
\label{eq: ACC_G}    
{\rm Acc}^G(\bm \theta) := \frac{1}{|\Omega|}\sum_{x \in \Omega} (p_{\bm \theta})_{G(x)}(x)
\end{equation}
where $(p_{\bm \theta})_i$ is the characteristic function defined in \eqref{p}. By construction, ${\rm Acc}^G(\bm \theta) \in [0,1]$, where $0$ corresponds to a \emph{complete mismatch} (no pixels assigned correctly) and $1$ to \emph{perfect reconstruction} (all pixels assigned correctly). This leads to the optimisation problem
\begin{equation}\label{eq-max-match}
	\max_{\bm\theta \in \R^{K\times N}} {\rm Acc}^G(\bm \theta).
\end{equation}

The mapping $\bm \theta \mapsto {\rm Acc}^G(\bm \theta)$ is typically discontinuous, which renders solving \eqref{eq-max-match} directly tricky. One way to address this is to replace $p_{\bm \theta}$ with its smeared-out counterpart $p_{\bm \theta}^{\e}$ defined in \eqref{p-e}, leading to
\begin{equation}\label{eq-max-match-e}
\max_{\bm\theta \in \R^{K\times N}}{\rm Acc}_\e^G(\bm \theta), \quad {\rm Acc}_\e^G(\bm \theta) := \frac{1}{|\Omega|}\sum_{x \in \Omega} (p_{\bm \theta}^\e)_{G(x)}(x).
\end{equation}
\db{The exponential function in the definition of $p_{\bm \theta}^{\e}$ can lead to numerical overflow in finite-precision arithmetic when $\e$ is small. A}
%Reflecting the usage of the exponential function to define $p_{\bm \theta}^{\e}$, a 
numerically stable \mbt{counterpart of \eqref{eq-max-match-e} is the} optimisation problem 
\begin{equation}\label{log-res}
	\max_{\bm\theta \in \R^{K\times N}}\Phi^G_{\e}(\bm \theta), \quad \Phi^G_\e(\bm \theta):= \frac{1}{|\Omega|}\sum_{x \in \Omega} \log\left((p_{\bm\theta}^\e)_{G(x)}(x)\right).
\end{equation}
We recognise that this is the well-known multinomial logistic regression approach to clustering, which is 
standard in the statistics and machine-learning literature \cite{BoydVandenberghe2004,HastieTibshiraniFriedman2009,Murphy2012}.

The problem in \eqref{log-res} is particularly nice when the objective function $\Phi^G_\e$ is concave, as then every local maximiser is also a global maximiser. This will be analysed in detail in Section~\ref{sec:analysis-linear-param}. But first, in the next section, we will establish 
a broad class of PMDs for which the concavity of the objective function $\Phi^G_\e$ in \eqref{log-res} can be guaranteed.

\subsection{Linear parameterisation ensures concavity}
Recall \eqref{f-param-class}, fix some $K \in \mathbb{N}_+$ 
and suppose that $h\,\colon\,\R^K \times \overline\Omega \to \R$ is an arbitrary parameterisation giving rise to functions $\{h_{\theta_i}\}_{i \in [N]}$. We first rewrite the objective function $\Phi_{\varepsilon}^G$ from \eqref{log-res} as
\begin{equation}\label{log-res-2}
\Phi_{\varepsilon}^G(\bm \theta) = \frac{1}{|\Omega|}\sum_{x \in \Omega}\left(-\frac{1}{\e}h_{\theta_{G(x)}}(x) - \log S_{\bm \theta}^\e(x)\right)
\end{equation}
and we recall from \eqref{p-e} that
\[
S_{\bm{\theta}}^\e(x) = \sum_{j \in [N]} \exp\left(-\frac1\e h_{\theta_j}(x)\right).
\]
It is a standard assertion relying on Jensen's inequality \cite{BoydVandenberghe2004,blanchard2021accurately} that, for fixed $\e > 0$ and $x \in \Omega$, the function $\bm\theta \mapsto -\log S_{\bm \theta}^\e(x)$ is concave if $\theta \mapsto -h(\theta,x)$ is convex. But in this case
\begin{equation}\label{diff-of-convex}
\left(-\frac{1}{\e}h_{\theta_{G(x)}}(x) - \log S_{\bm \theta}^\e(x)\right)
\end{equation}
is a sum of a convex function and a concave function. A sufficient condition to ensure concavity is thus that the mapping $\theta \mapsto h(\theta,x)$ is linear, so of the form 
\begin{equation}\label{f-param-linear}
h(\theta,x) = \theta \cdot \eta(x)
\end{equation}
where $\eta\,\colon\, \overline \Omega \to \R^K$
is a \emph{design function}, giving rise to the well-known notion of a \emph{design matrix}:
\[
\eta(\Omega) \in \R^{K \times |\Omega|},\qquad (\eta(\Omega))_{ij} := \eta_i(x_j), \quad i \in [K],\; j \in [|\Omega|].
\]
This is exactly the well-known ``feature map'' approach in statistical learning: $\eta(x)$ is the feature (or basis) vector associated with input $x$, and $\eta(\Omega)$ is the corresponding \emph{design matrix} used in linear and generalised linear models; see, e.g., \cite{HastieTibshiraniFriedman2009,Murphy2012}. 
Equivalently, $\eta$ can be viewed as an explicit embedding of the input space into a (possibly higher-dimensional) feature space, in which the model is linear.

The fact that a linear parametrisation does indeed ensure concavity is well-known and will be discussed in Proposition~\ref{prop:concavity-grad-hess}.

\subsubsection{Relation to standard diagrams}\label{sec:rel-std-diagrams}
We will now show that two standard minimisation diagrams can be recovered as special cases of the linear parametrisation class \eqref{f-param-linear} for specific choices of $\eta$. 

\paragraph{Power diagrams.} 
The $i$th cell of 
%a 
\db{the}
power diagram generated by \db{the} $N$ seed points 
%\linebreak 
${\bm{y} = \{y_1,\dots,y_N\}}$, $y_i \in \R^2$, and weights $\bm w = \{w_1,\dots,w_N\}$, $w_i \in \R$, is given by
\[
L_i = \{ x \in \overline \Omega\,\mid\, \|x-y_i\|^2 - w_i \leq \|x-y_j\|^2 - w_j,\;\forall j \in [N]\}
.
\]
After simplifying, this can be equivalently rewritten as a PMD with $K=3$ and
\[
h(\theta,x) = \theta \cdot \eta(x), \qquad \eta(x) = (x_{,1},x_{,2},1).
\]
We also have a relation
\[
\theta = (-2y_{,1},-2y_{,2},y_{,1}^2 + y_{,2}^2 - w)
\]
and conversely 
\begin{equation}
\label{pd-lr-to-phys}    
y_{,1} = -\frac12\theta_{,1}, \quad y_{,2} = -\frac12\theta_{,2},\quad w = \frac14({\theta_{,1}})^2 + \frac14(\theta_{,2})^2 -\theta_{,3}.
\end{equation}

\paragraph{Anisotropic power diagrams.} The $i$th cell of 
%an 
\db{the} anisotropic power diagram generated by \db{the} $N$ seed points ${\bm{y} = \{y_1,\dots,y_N\}}$, $y_i \in \R^2$, weights $\bm w = \{w_1,\dots,w_N\}$, $w_i \in \R$, and anisotropy matrices $\bm{\mathrm{A}} = (\mathrm{A}_1,\dots,\mathrm{A}_N)$, $\mathrm{A}_i$ a 2x2 symmetric positive definite matrix, is given by
\begin{equation}\label{L-APD}
L_i = \{ x \in \overline \Omega\,\mid\, \|x-y_i\|_{\mathrm{A}_i}^2 - w_i \leq \|x-y_j\|^2_{\mathrm{A}_j} - w_j,\;\forall j \in [N]\}, \qquad \|z\|_A^2:=z \cdot A z.
\end{equation}
Expanding $\|x-y\|_{\mathrm A}^2-w = x \cdot {\mathrm A} x - 2({\mathrm A}y)\cdot x + (y \cdot {\mathrm A}y-w)$ shows this is a PMD with $K=6$ and
\begin{equation}\label{h-APD}
h(\theta,x)=\theta\cdot\eta(x),\qquad \eta(x)=(x_{,1}^2,x_{,1}x_{,2},x_{,2}^2,x_{,1},x_{,2},1).
\end{equation}
We also have a relation
\[
\theta=\Big(\mathrm{A}_{11},\ 2\mathrm{A}_{12},\ \mathrm{A}_{22},\ -2(\mathrm{A}_{11}y_{,1}+\mathrm{A}_{12}y_{,2}),\ -2(\mathrm{A}_{12}y_{,1}+\mathrm{A}_{22}y_{,2}),\ y \cdot \mathrm{A}y-w\Big),
\]
and conversely
\begin{equation}\label{apd-lr-to-phys}
\mathrm{A}=\begin{pmatrix}\theta_{,1}&\tfrac12\theta_{,2}\\[2pt]\tfrac12\theta_{,2}&\theta_{,3}\end{pmatrix},\quad
y=-\frac12\,\mathrm{A}^{-1}\binom{\theta_{,4}}{\theta_{,5}},\quad
w=y\cdot \mathrm{A} y - \theta_{,6} = \frac14\binom{\theta_{,4}}{\theta_{,5}}\cdot  \mathrm{A}^{-1}\binom{\theta_{,4}}{\theta_{,5}}-\theta_{,6}.
\end{equation}
For an arbitrary $\bm \theta \in \mathbb{R}^{6 \times N}$, the corresponding anisotropy matrices $\mathrm{A}_i$ may not be positive definite. It turns out, however, that the constraint of positive definiteness can be enforced while solving the \emph{unconstrained} optimisation problem $\max_{\bm \theta \in \mathbb{R}^{6 \times N}} \Phi^G_{\e}(\bm \theta)$; see Section \ref{sec: PD constraint}.

\begin{remark}[Prior related work]
\label{rem: related work}
    A similar discussion on transforming the standard APD description in terms of $(\bm w, \bm y, \bm{\mathrm{A}})$ from \eqref{L-APD} into a linear one based on \eqref{h-APD} and the substitution \eqref{apd-lr-to-phys} was also considered in \cite{AFGK23} in the context of formulating a linear programming problem to solve the unregularised problem \eqref{eq-max-match}. In \cite{PFWKS21} on the other hand, the authors do work with the smeared-out characteristic function $p^\e_{\bm \theta}$ defined in \eqref{p-e}, but using the standard APD description in terms of $(\bm w, \bm y, \bm{\mathrm{A}})$. Furthermore, instead of $\Phi_\e^G$, a binary cross-entropy objective function is considered, which in our linear $\bm \theta$ description
    amounts to solving
	\begin{equation}\label{eq-max-phi-bce}
		\max_{\bm\theta \in \R^{K\times N}}\,\Phi^{\rm BCE}_{\varepsilon}(\bm\theta), \quad \Phi^{\rm BCE}_{\varepsilon}(\bm \theta) := \frac{1}{|\Omega|}\sum_{i=1}^N\left(\sum_{x \in G_i}\log((p_{\bm \theta}^{\e})_{i}(x)) + \sum_{x \not\in G_i} \log(1-(p_{\bm \theta}^{\e})_{i}(x))\right).
	\end{equation}
	This is a well-known ``one-vs-all" approach to multi-class classification problems \cite{RifkinKlautau2004}, which is routinely used as an alternative to the multinomial logistic regression approach. While solving \eqref{eq-max-phi-bce} has its advantages, with gradient information arguably richer, the big disadvantage is that the loss of concavity is unavoidable. This follows from  
	\[
	\log(1-(p_{\bm \theta}^{\e})_{i}(x)) = \log\left(\sum_{\substack{j \in [N]\\ j \neq i}} \exp\left(-\frac1\e h_{\theta_j}(x)\right)\right) - \log\left(\sum_{j \in [N]} \exp\left(-\frac1\e h_{\theta_j}(x)\right)\right),
	\]
	which, similarly to the discussion 
    \db{following}
    %near 
    \eqref{diff-of-convex}, is a difference of two convex functions.  Since the underlying problem is one of exclusive multi-class classification (each pixel belongs to exactly one grain), a ``one-vs-all" binary approach should probably be avoided, as \mbt{(i)~it introduces no additional information (membership in one class automatically determines non-membership in all others),}
    (ii)~the computational cost is much increased (the complement of the set $G_i$ contains most pixels), and (iii)~the concavity is lost. The computational cost issue can be partially mitigated by employing batched stochastic optimisation techniques, as is done in \cite{PFWKS21}, but this of course makes the gradient information less rich. 
\end{remark}

\begin{remark}[Voronoi, multiplicative Voronoi, and Apollonius diagrams are not linear]
	Voronoi diagrams are the special case of power diagrams 
    where $w_i = 0$ for all $i$. In this case, we observe that while we can still write a Voronoi diagram as a PMD with 
	\[
	h(\theta,x) = \theta \cdot \eta (x), \quad \eta(x)=(x_{,1},x_{,2},1),\qquad
	\theta=(-2y_{,1},-2y_{,2},\|y\|^2),
	\]
	Voronoi diagrams are \emph{not} linear because the parameter space is constrained by
	\[
	\theta_{,3}-\tfrac14(\theta_{,1}^2+\theta_{,2}^2)=0.
	\]
	The same is true for multiplicatively weighted Voronoi diagrams, with cell formula
	\[
	L_i = \{ x \in \overline \Omega\,\mid\, w_i\|x-y_i\|^2 \leq w_j\|x-y_j\|^2,\;\forall j \in [N]\},
	\] 
	leading to an incomplete degree two monomial basis (a complete treatment of such polynomial diagrams will be given Section~\ref{sec:higher-d-d}),
	\[
	\tilde \eta^{(2)}(x)=(x_{,1}^2,\ x_{,2}^2,\ x_{,1},\ x_{,2},\ 1),\quad
	\theta=(w,\ w,\ -2wy_{,1},\ -2wy_{,2},\ w\|y\|^2),
	\]
	and with a nonlinear constraint
	\[
	\theta_{,3}^2 + \theta_{,4}^2 = 4\theta_{,1}\theta_{,5}.
	\]
    Note that one typically also has the constraint $\theta_{i,1} = \theta_{i,2} = w_i >0$ for all $i$.
    Similarly, an Apollonius diagram, with cell formula 
	\[
	L_i = \{ x \in \overline \Omega\,\mid\, \|x-y_i\| - w_i \leq \|x-y_j\| - w_j,\;\forall j \in [N]\},
	\] 
	cannot be expressed as a linear parametrisation since the seed $y_i$ enters inside the norm and squaring both sides of the inequality does not lead to a linear parametrisation either. 
\end{remark}

\section{Polynomial diagrams}\label{sec:higher-d-d}
The linearisation of standard diagrams presented in Section~\ref{sec:rel-std-diagrams} shows that a power diagram is a \emph{first degree} diagram, since its design function includes all zeroth- and first-order polynomial terms in $(x_{,1},x_{,2})$. Likewise, an anisotropic power diagram is a \emph{second degree} diagram. In this section we propose a natural generalisation to a class of \emph{polynomial diagrams} of an arbitrary degree $d > 0$. Note that we avoid the term \emph{higher-order diagram}, as this is a well-known term for a different type of generalisation, such as higher-order Voronoi diagrams \cite{Lee1982,Aurenhammer1991} and, e.g., their more recent extension to line segments \cite{papadopoulou2016higher}.

\subsection{Multi-index notation}
For $\alpha=(\alpha_1,\alpha_2)\in\mathbb{N}_0^2$ and $x=(x_{,1},x_{,2})\in\overline\Omega$, write
\[
|\alpha|:=\alpha_1+\alpha_2,\qquad x^\alpha:=x_{,1}^{\alpha_1}x_{,2}^{\alpha_2}.
\]
Fix $d\in\mathbb{N}_+$
and define the set of multi-indices up to total degree $d$ by
\[
\mathcal{A}_d:=\{\alpha\in\mathbb{N}_0^2:\ |\alpha|\le d\}, 
\qquad 
K_d:=|\mathcal{A}_d|=\frac{(d+1)(d+2)}{2}.
\]
We fix once and for all an ordering of $\mathcal{A}_d$ (e.g.\ graded lexicographic), and implicitly identify vectors in $\mathbb{R}^{K_d}$ with multi-indices $\alpha\in\mathcal{A}_d$.

\subsection{Monomial basis}
Define the degree $d$ monomial design function
\[
\eta^{(d)}:\overline\Omega\to\mathbb{R}^{K_d},\qquad 
\eta^{(d)}(x):=\big(x^\alpha\big)_{\alpha\in\mathcal{A}_d}.
\]
A \emph{monomial degree $d$ minimisation diagram} is a polynomial diagram of the linear form \eqref{f-param-linear} with $\eta=\eta^{(d)}$, i.e.,
\begin{equation}\label{eq:h-degree-d-monomial}
	h(\theta,x)=\theta\cdot \eta^{(d)}(x)
	=\sum_{\alpha\in\mathcal{A}_d}\theta_{,\alpha}\,x^\alpha,
	\qquad \theta\in\mathbb{R}^{K_d}.
\end{equation}
For $\bm\theta=(\theta_1,\dots,\theta_N)\in\mathbb{R}^{K_d\times N}$, the induced grain map is
\[
F_{\bm\theta}(x)=
\argmin_{i\in[N]}h_{\theta_i}(x),
\]
with the same tie-breaking convention as in Section~\ref{sec:setting-notation}, and the corresponding cells are \mbt{
\begin{equation}\label{eq:Li}
L_i
:=
\Bigl\{
x\in\overline\Omega \;\Big|\;
h_{\theta_i}(x) < h_{\theta_j}(x)\ \forall j<i,
\quad
h_{\theta_i}(x) \le h_{\theta_j}(x)\ \forall j>i
\Bigr\}.
\end{equation}}
The associated design matrix is
\[
\eta^{(d)}(\Omega)\in\mathbb{R}^{K_d\times|\Omega|},
\qquad
(\eta^{(d)}(\Omega))_{\alpha,j}:=x_j^\alpha,
\quad \alpha\in\mathcal{A}_d,\ j\in[|\Omega|].
\]
A monomial basis approach is presented primarily as the most intuitive extension of first-degree (power) and second-degree (anisotropic power) diagrams. For practical computations, we instead work with Legendre polynomials, which we will discuss now.
\subsection{Legendre basis}\label{sec:Legendre}
Since $\overline\Omega=[-1,1]^2$ (and any other rectangular domain can be scaled to $[-1,1]^2$), it is natural to replace monomials by (products of univariate) Legendre polynomials \cite[Chapter~17]{trefethen2019approximation}. Let $\{P_m\}_{m\ge 0}$ denote the standard Legendre polynomials on $[-1,1]$, normalised by $P_0\equiv 1$, $P_1(t)=t$ and satisfying the three-term recurrence
\[
(m+1)P_{m+1}(t)=(2m+1)\,t\,P_m(t)-m\,P_{m-1}(t).
\]
Legendre polynomials are orthogonal on $[-1,1]$ with respect to uniform weight,
\[
\int_{-1}^1 P_m(t)\,P_n(t)\,dt=\frac{2}{2n+1}\,\delta_{mn}.
\]
This typically leads to the associated columns of the design matrix being less correlated, improving conditioning of the design matrix and stability of gradient-based optimisation, especially for larger $d$. 

Define the Legendre product indexed by $\alpha=(\alpha_1,\alpha_2)\in\mathcal{A}_d$ as
\[
\psi_\alpha(x):=P_{\alpha_1}(x_{,1})\,P_{\alpha_2}(x_{,2}).
\]
Then the degree $d$ Legendre design function is
\[
\eta_{\mathrm{L}}^{(d)}:\overline\Omega\to\mathbb{R}^{K_d},\qquad 
\eta_{\mathrm{L}}^{(d)}(x):=\big(\psi_\alpha(x)\big)_{\alpha\in\mathcal{A}_d}
=\big(P_{\alpha_1}(x_{,1})P_{\alpha_2}(x_{,2})\big)_{\alpha\in\mathcal{A}_d}.
\]
A \emph{Legendre degree $d$ minimisation diagram} is a polynomial diagram of the linear form \eqref{f-param-linear} with $\eta=\eta_{\mathrm{L}}^{(d)}$, i.e.,
\begin{equation}\label{eq:h-degree-d-legendre}
	h(\theta,x)=\theta\cdot \eta_{\mathrm{L}}^{(d)}(x)
	=\sum_{\alpha\in\mathcal{A}_d}\theta_{,\alpha}\,P_{\alpha_1}(x_{,1})P_{\alpha_2}(x_{,2}),
	\qquad \theta\in\mathbb{R}^{K_d}.
\end{equation}
The induced grain map $F_{\bm\theta}$ and cells $L_i$ are defined as before. The associated design matrix is
\[
\eta_{\mathrm{L}}^{(d)}(\Omega)\in\mathbb{R}^{K_d\times|\Omega|},
\qquad
(\eta_{\mathrm{L}}^{(d)}(\Omega))_{\alpha,j}:=P_{\alpha_1}(x_{j,1})P_{\alpha_2}(x_{j,2}),
\quad \alpha\in\mathcal{A}_d,\ j\in[|\Omega|].
\]

\begin{remark}[Consistency with standard cases]\label{rem:basis_change}
For $d=1$, the monomial design gives
\[
\eta^{(1)}(x)=(x_{,1},x_{,2},1),
\]
recovering the power-diagram linearisation.
For $d=2$, the monomial design gives
\[
\eta^{(2)}(x)=(x_{,1}^2,x_{,1}x_{,2},x_{,2}^2,x_{,1},x_{,2},1),
\]
recovering \eqref{h-APD}.
For the Legendre variant, note that $P_0\equiv 1$ and $P_1(t)=t$, hence $\eta_{\mathrm{L}}^{(1)}(x)$ spans the same linear feature space as the monomial case (up to re-ordering), while higher $d$ provide an alternative (often better conditioned) polynomial basis. Since both monomial and Legendre bases span the same polynomial space $\mathbb{P}_d(\mathbb{R}^2)$ (total degree $\le d$), there exists a unique and easily computable invertible matrix
$T^{(d)}\in\mathbb{R}^{K_d\times K_d}$ allowing us to conveniently convert between them. 
\end{remark}

\begin{remark}[Geometric interpretation of cell boundaries]\label{rem:geom-boundaries}
For any $i\neq j$, the interface between cells $L_i$ and $L_j$ is contained in
\[
\{x\in\overline\Omega:\ h_{\theta_i}(x)=h_{\theta_j}(x)\}
=
\{x\in\overline\Omega:\ (\theta_i-\theta_j)\cdot \eta^{(d)}(x)=0\},
\]
which is an algebraic curve of degree at most $d$ (independent of the chosen basis). 
\end{remark}

Extensive numerical tests showcasing the fitting of Legendre polynomial diagrams to grain maps will be presented in Section~\ref{sec:numerics}. Let us first, in the next section, provide a mathematical analysis of optimising abstract linear PMDs. In particular, the motivation for using Legendre polynomials is discussed in Remark~\ref{rem:conditioning-basis}.

\section{Analysis of optimising linear PMDs}\label{sec:analysis-linear-param}
Throughout this section we assume the linear parametrisation
\begin{equation}\label{eq:linear-h}
	h_{\theta_i}(x)=\theta_i\cdot \eta(x),\qquad x\in \overline \Omega,\ i\in[N],
\end{equation}
with design function $\eta:\overline \Omega \to\mathbb{R}^K$.
For $\varepsilon>0$ define the soft assignment
\[
(p_{\bm\theta}^\varepsilon)_i(x):=\frac{\exp\!\left(-\frac1\varepsilon h_{\theta_i}(x)\right)}
{\sum_{j=1}^N\exp\!\left(-\frac1\varepsilon h_{\theta_j}(x)\right)}
\]
and recall that the objective function is 
\begin{equation}\label{eq:Phi}
	\Phi_\varepsilon^G(\bm\theta):=\frac{1}{|\Omega|}\sum_{x\in\Omega}\log\big((p_{\bm\theta}^\varepsilon)_{G(x)}(x)\big)\le 0.
\end{equation}
We also recall the induced hard assignment map
\[
F_{\bm\theta}(x):=\argmin_{i\in[N]}h_{\theta_i}(x),
\]
with the same tie-breaking rule as in Section~\ref{sec:setting-notation}.
Finally, define the fraction of misassigned pixels
\[
{\rm Err}^G(\bm \theta) :=\frac{1}{|\Omega|}\Big|\{x\in\Omega:\ F_{\bm\theta}(x)\neq G(x)\}\Big|.
\]
Note that by construction ${\rm Err}^G(\bm \theta) = 1 - {\rm Acc}^G(\bm \theta)$ (defined in \eqref{eq: ACC_G}).

We now present a short, self-contained analysis of the 
optimisation problem $\max \Phi_\varepsilon^G$,
which is equivalent to softmax (multinomial logistic) regression with log-sum-exp normalisation. We do not claim novelty, as these results are standard in the statistics and machine-learning literature \cite{BoydVandenberghe2004,HastieTibshiraniFriedman2009,Murphy2012}, including the classical discussion of (non-)existence of maximisers under separation, which to the best of our knowledge can be traced back to \cite{albert1984existence,santner1986note}.

We begin by showing that since the parametrisation is linear, the parameter $\e >0$ can always be absorbed into $\theta$.

\begin{lemma}[$\varepsilon$--parameter scaling]\label{lem:eps-scaling}
	For any $\varepsilon>0$ and $\bm\theta\in\mathbb{R}^{K\times N}$, define $\bm\vartheta:=\bm\theta/\varepsilon$
	(i.e., %\ 
    $\vartheta_i=\theta_i/\varepsilon$ for all $i$). Then, for all $x \in \Omega$,
	\[
	F_{\bm\vartheta}(x)=F_{\bm\theta}(x), \qquad (p_{\bm\theta}^{\varepsilon})_i(x)=(p_{\bm\vartheta}^{1})_i(x)\qquad i\in[N]
	\]
	and in particular $\Phi_\varepsilon^G(\bm\theta)=\Phi_1^G(\bm\vartheta)$.
\end{lemma}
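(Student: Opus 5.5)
The whole argument rests on the linearity of $h$ in $\theta$ from \eqref{eq:linear-h}. For every $x\in\Omega$ and $i\in[N]$,
\[
h_{\vartheta_i}(x)=\frac{\theta_i}{\varepsilon}\cdot\eta(x)=\frac1\varepsilon h_{\theta_i}(x).
\]
Each of the three claims follows from this one identity.

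\textbf{Hard assignment.} Since $\varepsilon>0$, multiplying by $1/\varepsilon$ is strictly increasing. Hence for all $i,j$ we have
\[
h_{\vartheta_i}(x)<h_{\vartheta_j}(x)\iff h_{\theta_i}(x)<h_{\theta_j}(x),
\]
and the same holds with $\le$ in place of $<$. The set of minimising indices is therefore the same for $\bm\theta$ and $\bm\vartheta$. The tie-breaking rule in \eqref{Li_def} (take the smallest minimising index) is determined entirely by these strict and non-strict inequalities, so it selects the same index in both cases. This gives $F_{\bm\vartheta}(x)=F_{\bm\theta}(x)$, and more strongly the cells $L_i$ coincide.

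\textbf{Soft assignment.} Substituting the identity into \eqref{p-e} with $\varepsilon=1$ gives
\[
\exp\bigl(-h_{\vartheta_i}(x)\bigr)=\exp\Bigl(-\tfrac1\varepsilon h_{\theta_i}(x)\Bigr)\quad\text{for each } i.
\]
The numerator of $(p^1_{\bm\vartheta})_i(x)$ therefore matches that of $(p^\varepsilon_{\bm\theta})_i(x)$. Summing over $i$ shows $S^1_{\bm\vartheta}(x)=S^\varepsilon_{\bm\theta}(x)$, so $(p^1_{\bm\vartheta})_i(x)=(p^\varepsilon_{\bm\theta})_i(x)$ for every $i$.

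\textbf{Objective.} Apply the soft-assignment equality with $i=G(x)$, take logarithms, and average over $x\in\Omega$ in \eqref{eq:Phi}. This yields $\Phi^G_\varepsilon(\bm\theta)=\Phi^G_1(\bm\vartheta)$.

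There is no real obstacle. The only point needing care is to state explicitly that positivity of $\varepsilon$ preserves both strict and non-strict inequalities, so that the asymmetric tie-breaking convention is respected.
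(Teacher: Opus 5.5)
Your proof is correct and is the direct computation the paper leaves implicit (its proof says only ``This follows by a direct computation''). You use linearity to get $h_{\vartheta_i}(x)=\tfrac1\varepsilon h_{\theta_i}(x)$. Order preservation under multiplication by $1/\varepsilon>0$, including the smallest-index tie-breaking rule, gives the hard assignment. Substituting the identity into the softmax gives the soft assignment and hence the objective.
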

\begin{proof}
	This follows by a direct computation.
\end{proof}

%db{[DB: Is it also worth noting that $F_{\bm\vartheta}(x)=F_{\bm\theta}(x)$, i.e., the cells $L_i$ are unchanged by scaling?]}

Next we turn to the so-called gauge invariance and fixing. 
\begin{lemma}[Gauge invariance]\label{lem:gauge}
	For any $c\in\mathbb{R}^K$, define $\bm\theta'=(\theta'_1,\dots,\theta'_N)$ by $\theta'_i:=\theta_i+c$ for all $i$.
	Then for all $x\in\Omega$,
	\[
	F_{\bm\theta'}(x)=F_{\bm\theta}(x),\qquad (p_{\bm\theta'}^\varepsilon)_i(x)=(p_{\bm\theta}^\varepsilon)_i(x),\qquad i\in[N],
	\]
	and hence $\Phi_\varepsilon^G(\bm\theta')=\Phi_\varepsilon^G(\bm\theta)$.
\end{lemma}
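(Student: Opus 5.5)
The plan is to use linearity of the parametrisation \eqref{eq:linear-h} to express every quantity in terms of the differences $h_{\theta_i}(x)-h_{\theta_j}(x)$, which do not change under a common shift. Fix $x\in\Omega$ and set $s:=c\cdot\eta(x)\in\R$. By \eqref{eq:linear-h},
\[
h_{\theta'_i}(x)=(\theta_i+c)\cdot\eta(x)=h_{\theta_i}(x)+s\qquad\text{for all } i\in[N].
\]
So all $N$ values at $x$ are shifted by the same scalar $s$. The scalar depends on $x$, but not on $i$, and only that matters.

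For the hard assignment, adding the same $s$ to every competitor preserves each strict inequality $h_{\theta_i}(x)<h_{\theta_j}(x)$ and each non-strict inequality $h_{\theta_i}(x)\le h_{\theta_j}(x)$. Hence $x$ satisfies the defining conditions of the cell $L_i$ in \eqref{eq:Li} for $\bm\theta'$ exactly when it does for $\bm\theta$. This includes the smallest-index tie-breaking rule, so $F_{\bm\theta'}(x)=F_{\bm\theta}(x)$.

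For the soft assignment, substitute into \eqref{p-e}. The numerator and every term of $S^\varepsilon_{\bm\theta'}(x)$ acquire the same factor $\exp(-s/\varepsilon)$:
\[
(p^\varepsilon_{\bm\theta'})_i(x)=\frac{e^{-s/\varepsilon}\exp\!\left(-\tfrac1\varepsilon h_{\theta_i}(x)\right)}{e^{-s/\varepsilon}\,S^\varepsilon_{\bm\theta}(x)}=(p^\varepsilon_{\bm\theta})_i(x).
\]
Finally, $\Phi^G_\varepsilon$ in \eqref{eq:Phi} is an average over $x\in\Omega$ of $\log\big((p^\varepsilon_{\bm\theta})_{G(x)}(x)\big)$. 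Each summand is unchanged, so the two objective values are equal.

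There is no real obstacle here. The only point needing care is the tie-breaking convention, and it holds because both strict and non-strict inequalities are translation invariant.
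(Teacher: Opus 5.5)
Your proof is correct and is exactly the direct computation the paper has in mind; the paper's proof says only ``This follows by a direct computation.'' A common shift $c$ adds the index-independent scalar $c\cdot\eta(x)$ to every score, which preserves all strict and non-strict comparisons (and hence the tie-broken argmin) and cancels in the softmax ratio — your explicit treatment of the tie-breaking rule fills in a detail the paper leaves implicit.
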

\begin{proof}
	This follows by a direct computation.
\end{proof}
\begin{remark}[Gauge fixing]\label{rem:gauge-fix}
	We can factor out the gauge invariance by $K$ independent linear constraints removing the common-shift direction. The simplest one  is to treat one of the cells as the reference one and, e.g., fix $\theta_N\equiv 0$.
\end{remark}

\begin{proposition}[Concavity of $\Phi^G_{\varepsilon}$ and gradient and Hessian formulas]\label{prop:concavity-grad-hess}
	Fix $\varepsilon>0$. Under \eqref{eq:linear-h}, the objective function $\Phi_\varepsilon^G$ in \eqref{eq:Phi} is concave. Moreover, for each $i\in[N]$,
	\begin{equation}\label{eq:grad}
		\nabla_{\theta_i}\Phi_\varepsilon^G(\bm\theta)
		=
		-\frac{1}{\varepsilon|\Omega|}\sum_{x\in\Omega}\Big(
        \delta_{iG(x)}
        -(p_{\bm\theta}^{\varepsilon})_i(x)\Big)\,\eta(x).
	\end{equation}
    Similarly, for $i,k\in[N]$, the $(i,k)$-block of the Hessian is
\begin{equation}\label{eq:hess}
		\nabla_{\theta_k}\nabla_{\theta_i}\Phi_\varepsilon^G(\bm\theta)
        = -\frac{1}{\varepsilon^2|\Omega|}\sum_{x\in\Omega} (p_{\bm\theta}^{\varepsilon})_i(x)\big(\delta_{ik}-(p_{\bm\theta}^{\varepsilon})_k(x)\big)\,\bigl(\eta(x) \otimes \eta(x)\bigr),
	\end{equation}
	and the Hessian is negative semidefinite (with a null direction corresponding to the gauge shift in Lemma~\ref{lem:gauge}).
\end{proposition}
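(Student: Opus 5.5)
The plan is a direct computation: differentiate $\Phi_\varepsilon^G$ twice, then read concavity off the sign of the Hessian quadratic form. Under \eqref{eq:linear-h}, \eqref{log-res-2} becomes
\[
\Phi_\varepsilon^G(\bm\theta)=\frac{1}{|\Omega|}\sum_{x\in\Omega}\Bigl(-\tfrac1\varepsilon\,\theta_{G(x)}\cdot\eta(x)-\log S^\varepsilon_{\bm\theta}(x)\Bigr),
\qquad S^\varepsilon_{\bm\theta}(x)=\sum_{j}\exp\bigl(-\tfrac1\varepsilon\theta_j\cdot\eta(x)\bigr).
\]
This is a finite sum of smooth functions of $\bm\theta$, so I will work pixel by pixel and sum at the end.

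\textbf{Gradient.} For fixed $x$, the first term is linear in $\bm\theta$. Its $\theta_i$-gradient is $-\tfrac1\varepsilon\delta_{iG(x)}\eta(x)$. By the chain rule, $\nabla_{\theta_i}\log S^\varepsilon_{\bm\theta}(x)=-\tfrac1\varepsilon(p^\varepsilon_{\bm\theta})_i(x)\,\eta(x)$, using the definition \eqref{p-e} of the soft assignment. Combining the two terms and averaging over $\Omega$ gives \eqref{eq:grad}.

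\textbf{Hessian.} Only the $(p^\varepsilon_{\bm\theta})_i(x)\,\eta(x)$ part of \eqref{eq:grad} depends on $\bm\theta$. From the quotient structure of \eqref{p-e} I will compute
\[
\nabla_{\theta_k}(p^\varepsilon_{\bm\theta})_i(x)=-\tfrac1\varepsilon(p^\varepsilon_{\bm\theta})_i(x)\bigl(\delta_{ik}-(p^\varepsilon_{\bm\theta})_k(x)\bigr)\eta(x).
\]
Multiplying by the prefactor $\tfrac{1}{\varepsilon|\Omega|}$ and by $\eta(x)$ then yields \eqref{eq:hess}.

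\textbf{Negative semidefiniteness.} This is the only step that is not a pure computation. Take a direction $\bm v=(v_1,\dots,v_N)\in\mathbb{R}^{K\times N}$ and set $a_i(x):=v_i\cdot\eta(x)$ and $p_i:=(p^\varepsilon_{\bm\theta})_i(x)$. Contracting \eqref{eq:hess} with $\bm v$ on both sides gives
\[
\sum_{i,k} v_i^{\top}\bigl(\nabla_{\theta_k}\nabla_{\theta_i}\Phi_\varepsilon^G\bigr)v_k
=-\frac{1}{\varepsilon^2|\Omega|}\sum_{x\in\Omega}\Bigl(\sum_i p_i\,a_i(x)^2-\Bigl(\sum_i p_i\,a_i(x)\Bigr)^2\Bigr).
\]
Since $p_i>0$ and $\sum_i p_i=1$, each bracket is the variance of the values $a_i(x)$ under the probability vector $(p_i)_i$. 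It is therefore nonnegative by Jensen's (or Cauchy--Schwarz) inequality, so the Hessian is negative semidefinite.

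A twice-differentiable function on $\mathbb{R}^{K\times N}$ with negative semidefinite Hessian everywhere is concave, which proves the first claim. For the null direction, take $v_i=c$ for all $i$, which is exactly the gauge shift of Lemma~\ref{lem:gauge}. Then $a_i(x)$ does not depend on $i$, every variance vanishes, and the quadratic form is zero. This is consistent with $\Phi^G_\varepsilon$ being constant along that line.
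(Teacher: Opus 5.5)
Your proposal is correct and follows the paper's route: direct differentiation gives \eqref{eq:grad} and \eqref{eq:hess}, and negative semidefiniteness comes from writing the Hessian quadratic form as $-\frac{1}{\varepsilon^2|\Omega|}\sum_x \mathrm{Var}_{P_x}(i\mapsto v_i\cdot\eta(x))$, which is the argument the paper places in the proof of Proposition~\ref{prop:strict-concavity}. For the null direction, add one line: either note that a vanishing quadratic form of a negative semidefinite matrix already forces $\mathbf{v}$ into the kernel, or check directly that $\sum_k(\delta_{ik}-p_k)=0$, so the Hessian annihilates $(c,\dots,c)$.
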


\begin{proof}
	Formulas \eqref{eq:grad} and \eqref{eq:hess} follow from a direct computation. The negative semidefiniteness of the Hessian follows from the proof of Proposition~\ref{prop:strict-concavity} below.
\end{proof}

\begin{proposition}[Strict concavity of $\Phi^G_{\varepsilon}$]
\label{prop:strict-concavity}
    Assume that the set $\{\eta(x) : x \in \Omega\}$ spans $\mathbb{R}^K$. Then the nullspace of the Hessian consists only of vectors of the form $\mathbf{v} = (c,\dots,c)$ for some $c\in\mathbb{R}^K$, and the function $\Phi_\varepsilon^G$ is strictly concave on the orthogonal complement of this nullspace.
\end{proposition}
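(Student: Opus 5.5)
The argument runs through the quadratic form of the Hessian. Fix $\bm\theta$ and a direction $\mathbf{v}=(v_1,\dots,v_N)\in\mathbb{R}^{K\times N}$. For each pixel $x$, set $a_i(x):=v_i\cdot\eta(x)$ and abbreviate $p_i:=(p^\varepsilon_{\bm\theta})_i(x)$. Contracting \eqref{eq:hess} with $\mathbf{v}$ on both sides gives
\[
\mathbf{v}\cdot \nabla^2\Phi^G_\varepsilon(\bm\theta)\,\mathbf{v}
= -\frac{1}{\varepsilon^2|\Omega|}\sum_{x\in\Omega}\Big(\sum_i p_i a_i^2-\Big(\sum_i p_i a_i\Big)^2\Big).
\]
The bracket is the variance of the random variable taking value $a_i(x)$ with probability $p_i$. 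Equivalently, since $\sum_i p_i=1$, it equals $\sum_i p_i\big(a_i-\bar a(x)\big)^2$ with $\bar a(x)=\sum_i p_ia_i$. It is therefore nonnegative, so the Hessian is negative semidefinite. This also supplies the deferred part of Proposition~\ref{prop:concavity-grad-hess}.

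Next I characterise the nullspace. The Hessian is symmetric and negative semidefinite, so $\mathbf{v}$ lies in its nullspace if and only if the quadratic form vanishes. Each summand is nonnegative, so the form vanishes exactly when every variance vanishes. The key fact is that $p_i>0$ for all $i$ and $x$, because $p_i$ is a ratio of exponentials. A variance with strictly positive weights is zero only when all $a_i(x)$ coincide. So for every $x\in\Omega$ and all $i,j$ we need $(v_i-v_j)\cdot\eta(x)=0$. By assumption $\{\eta(x):x\in\Omega\}$ spans $\mathbb{R}^K$, which forces $v_i=v_j$ for all $i,j$, i.e.\ $\mathbf{v}=(c,\dots,c)$. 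Conversely, such vectors make all $a_i(x)$ equal and hence lie in the nullspace, matching Lemma~\ref{lem:gauge}.

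Finally I deduce strict concavity on the orthogonal complement $V$ of $\{(c,\dots,c)\}$. For every $\bm\theta$, the quadratic form is strictly negative on $V\setminus\{0\}$. Take distinct $\bm\theta,\bm\theta'$ in $V$ (or in any affine translate of $V$). Then $\bm\theta'-\bm\theta\in V\setminus\{0\}$, so the second derivative of $t\mapsto\Phi^G_\varepsilon(\bm\theta+t(\bm\theta'-\bm\theta))$ is strictly negative on $[0,1]$. That one-variable function is therefore strictly concave, which is strict concavity of the restriction.

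I expect the only step needing care to be the nullspace characterisation. It relies on the strict positivity of the soft assignments, which is exactly where $\varepsilon>0$ enters: in the hard limit some $p_i$ vanish and the argument would break. The rest is routine algebra.
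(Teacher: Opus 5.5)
Your proposal is correct and follows essentially the same route as the paper: write the Hessian quadratic form as $-\frac{1}{\varepsilon^2|\Omega|}\sum_{x}\mathrm{Var}_{P_x}$, use strict positivity of the soft assignments to force $v_i\cdot\eta(x)$ to be independent of $i$, and then use the spanning assumption to conclude $v_i\equiv v_1$. Your two additions fill in steps the paper leaves implicit: that for a negative semidefinite symmetric matrix the nullspace equals the zero set of the quadratic form, and the line-restriction argument showing the Hessian is negative definite on the complement, which yields strict concavity there.
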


\begin{proof}
To improve readability, set $p_i(x) := (p_{\bm\theta}^{\varepsilon})_i(x)$, and for every $x\in\Omega$ denote by
\[
P_x = (p_1(x),\dots,p_N(x))
\]
the associated probability distribution. Let $\mathbf{v} = (v_1, \dots, v_N)$ be a block vector, where $v_i \in \mathbb{R}^K$. The quadratic form associated with the Hessian matrix of $\Phi_\varepsilon^G(\bm\theta)$ is
    \begin{align*}
        \mathbf{v}^\top \nabla^2 \Phi_\varepsilon^G(\bm\theta)\, \mathbf{v}
        &=-\frac{1}{\varepsilon^2|\Omega|} \sum_{x\in\Omega} \left[ \sum_{i=1}^N p_i(x)\, (v_i \cdot \eta(x))^2 - \left( \sum_{i=1}^N p_i(x)\, (v_i \cdot \eta(x)) \right)^2 \right]\\
        &=-\frac{1}{\varepsilon^2|\Omega|} \sum_{x\in\Omega} \mathrm{Var}_{P_x}\bigl(i\mapsto v_i\cdot\eta(x)\bigr),
    \end{align*}
    where $\mathrm{Var}_{P_x}(\cdot)$ denotes the variance with respect to the probability distribution $P_x$.
    Since the variance is non-negative, the quadratic form is non-positive, proving concavity of $\Phi_\varepsilon^G$.

    The formula above shows that $\mathbf{v}$ belongs to the nullspace of the Hessian of $\Phi_\varepsilon^G$ if and only if for all $x$, the variance of $i\mapsto v_i\cdot\eta(x)$ vanishes. Since the probabilities $p_i(x)>0$ are all strictly positive, this implies that $i \mapsto v_i\cdot\eta(x)$ is constant for all $x$. In particular, for each $i$ the vector $v_i - v_1$ is orthogonal to $\eta(x)$ for all $x \in \Omega$. Since by assumption the vectors $\eta(x)$, $x\in\Omega$, span the entire space $\mathbb{R}^K$, we get $v_i \equiv v_1$ for all $i$, which corresponds exactly to the gauge shift invariance.
\end{proof}
\mbt{
\begin{remark}[Conditioning and choice of basis]\label{rem:conditioning-basis}
    The Hessian in \eqref{eq:hess} is built from weighted second-moment matrices of the form
    \[
    \sum_{x\in\Omega} (p_{\bm\theta}^{\varepsilon})_i(x)\,\eta(x)\otimes\eta(x),
    \]
    and the quadratic-form representation in the proof of Proposition~\ref{prop:strict-concavity} shows that their conditioning directly influences the conditioning of the optimisation problem. This motivates choosing a basis $\eta$ for which these matrices are better conditioned and, in favourable cases, close to diagonal. In particular, after rescaling the domain to $[-1,1]^2$, tensor-product Legendre polynomials are a natural choice: in the continuum they are orthogonal with respect to the uniform measure \cite[Chapter~17]{trefethen2019approximation}, and on sufficiently regular discrete grids the corresponding discrete Gram matrices are typically much better conditioned, and often closer to diagonal, than those arising from the monomial basis \cite{gautschi2020stable}. Although the
    weights $(p_{\bm\theta}^{\varepsilon})_i(x)$ mean that the matrices above are not, in general,
    exactly diagonal in the Legendre basis, one still expects a substantial conditioning advantage
    over monomials whenever the induced discrete measures are not too far from uniform.  This is one of the motivations for the Legendre-based parametrisation discussed in Section~\ref{sec:Legendre}.
\end{remark}
}

Next we investigate what happens as $\varepsilon \to 0$.

\begin{proposition}[Uniform $\varepsilon\to0$ limit of the rescaled objective]\label{prop:uniform-eps0}
	Define $\mathcal{E}_\varepsilon(\bm\theta):=-\varepsilon\,\Phi_\varepsilon^G(\bm\theta)$.
	Then for every $\bm\theta\in\mathbb{R}^{K\times N}$ and every $\varepsilon>0$,
	\begin{equation}\label{eq:uniform-bound}
		0 \le \mathcal{E}_\varepsilon(\bm\theta)-\mathcal{E}_0(\bm\theta)\le \varepsilon\log N,
	\end{equation}
	where the limit functional is
	\begin{equation}\label{eq:E0}
		\mathcal{E}_0(\bm\theta)
		:=
		\frac{1}{|\Omega|}\sum_{x\in\Omega}\Big(h_{\theta_{G(x)}}(x)-\min_{j\in[N]}h_{\theta_j}(x)\Big)\ \ge 0.
	\end{equation}
	In particular, $\mathcal{E}_\varepsilon\to\mathcal{E}_0$ \emph{uniformly} on $\mathbb{R}^{K\times N}$ as $\varepsilon\to0$. Moreover, if perfect reconstruction of the given grain map $G$ is possible (i.e., if there exists $\bm\theta$ such that $F_{\bm\theta}(x)=G(x)$ for all $x\in\Omega$), then $\min \mathcal{E}_0=0$.
\end{proposition}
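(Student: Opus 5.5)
The plan is to prove the bound \eqref{eq:uniform-bound} pointwise in $x$ and then average. Using the rewriting \eqref{log-res-2}, we have
\[
\mathcal{E}_\varepsilon(\bm\theta)=\frac{1}{|\Omega|}\sum_{x\in\Omega}\Bigl(h_{\theta_{G(x)}}(x)+\varepsilon\log S^\varepsilon_{\bm\theta}(x)\Bigr),
\]
so subtracting \eqref{eq:E0} gives
\[
\mathcal{E}_\varepsilon(\bm\theta)-\mathcal{E}_0(\bm\theta)=\frac{1}{|\Omega|}\sum_{x\in\Omega}\Bigl(\varepsilon\log S^\varepsilon_{\bm\theta}(x)+\min_{j\in[N]}h_{\theta_j}(x)\Bigr).
\]
It therefore suffices to show, for each fixed $x$, the elementary soft-min inequality
\[
0\le \varepsilon\log\sum_{j\in[N]}e^{-a_j/\varepsilon}+\min_j a_j\le\varepsilon\log N,
\qquad a_j:=h_{\theta_j}(x).
\]

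To prove this, set $m:=\min_j a_j$ and factor out $e^{-m/\varepsilon}$, so that $\varepsilon\log S^\varepsilon_{\bm\theta}(x)+m=\varepsilon\log\sum_j e^{-(a_j-m)/\varepsilon}$. Every term in the last sum lies in $(0,1]$, and at least one term (an index attaining the minimum) equals $1$. Hence the sum lies in $[1,N]$, and taking $\varepsilon\log$ gives the two bounds. Averaging over $x\in\Omega$ yields \eqref{eq:uniform-bound}. Since the right-hand side $\varepsilon\log N$ is independent of $\bm\theta$, uniform convergence of $\mathcal{E}_\varepsilon$ to $\mathcal{E}_0$ on $\mathbb{R}^{K\times N}$ follows immediately. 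Nonnegativity of $\mathcal{E}_0$ is clear because each summand $h_{\theta_{G(x)}}(x)-\min_j h_{\theta_j}(x)$ is nonnegative. Note that this part of the argument does not use the linearity \eqref{eq:linear-h}.

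For the final claim, suppose $F_{\bm\theta}(x)=G(x)$ for all $x\in\Omega$. By definition of $F_{\bm\theta}$ as an argmin (the tie-breaking rule is irrelevant here), $h_{\theta_{G(x)}}(x)=\min_j h_{\theta_j}(x)$ for every $x$. Thus every summand in \eqref{eq:E0} vanishes, so $\mathcal{E}_0(\bm\theta)=0$. Combined with $\mathcal{E}_0\ge0$, this shows that the minimum of $\mathcal{E}_0$ is attained and equals $0$.

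The only step needing any care is the factoring in the log-sum-exp bound, which is standard. I expect no genuine obstacle, beyond keeping the sign conventions of $\mathcal{E}_\varepsilon=-\varepsilon\Phi^G_\varepsilon$ straight.
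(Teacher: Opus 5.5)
Your proof is correct and follows the paper's own argument. Both reduce the claim to the pointwise log-sum-exp sandwich $0\le\varepsilon\log\sum_j e^{-h_{\theta_j}(x)/\varepsilon}+\min_j h_{\theta_j}(x)\le\varepsilon\log N$, average over $\Omega$, and then observe that perfect reconstruction makes every summand of $\mathcal{E}_0$ vanish; the only difference is that you prove the sandwich by factoring out $e^{-m/\varepsilon}$, whereas the paper cites it (and, as a side remark, $\min\mathcal{E}_0=0$ in fact holds unconditionally, since $\mathcal{E}_0(\bm 0)=0$ for the fully ambiguous diagram).
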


\begin{proof}
	For each $x\in\Omega$, write $a_j(x):=-h_{\theta_j}(x)$.
	Then
    \[
    \varepsilon\log\sum_{j}e^{a_j(x)/\varepsilon}
    \]
    is the standard log-sum-exp smooth approximation of $\max_j a_j(x)$, with the sharp bounds (see, e.g., \cite{blanchard2021accurately})
	\[
	\max_j a_j(x)\ \le\ \varepsilon\log\sum_{j}e^{a_j(x)/\varepsilon}\ \le\ \max_j a_j(x)+\varepsilon\log N.
	\]
    Substituting $a_j(x)=-h_{\theta_j}(x)$, rearranging, and averaging over $x$ yields
    \[
    0 \ \le \ \frac{1}{|\Omega|} \sum_{x \in \Omega} \varepsilon \log \sum_j \exp \left( -\frac{h_{\theta_j}(x)}{\varepsilon} \right) + \min_{j \in [N]} h_{\theta_j}(x) \ \le \ \varepsilon \log N.
    \]
    Then \eqref{eq:uniform-bound} follows immediately from the definitions of $\mathcal{E}_\varepsilon$ and $\mathcal{E}_0$. For perfect reconstruction, one has $h_{\theta_{G(x)}}(x) = \min_{j \in [N]} h_{\theta_j}(x)$ for all $x \in \Omega$, and hence $\min \mathcal{E}_0=0$, as claimed.
\end{proof}

The final set of results concerns the question of (non-)existence of maximisers of $\Phi_\varepsilon^G$, which is known to depend on whether the diagram representation can perfectly recover the given grain map $G$ \cite{albert1984existence,santner1986note}.

\begin{proposition}[Perfect reconstruction implies the supremum is not attained]\label{prop:sep-sup}
	Assume that $N>1$ and there exists $\bm\theta^0$ such that ${\rm Err}^G(\bm \theta^0) = 0$ (so $F_{\bm\theta^0}(x)=G(x)$ for all $x\in\Omega$) and $G(x)$ is the strict minimiser of $i \mapsto h_{\theta_i^0}(x)$ for all $x \in \Omega$. Then, for any fixed $\varepsilon>0$,
	\[
	\sup_{\bm\theta}\Phi_\varepsilon^G(\bm\theta)=0,
	\]
	and the supremum is not attained (i.e., there is no maximiser in $\mathbb{R}^{K\times N}$, even after gauge fixing).
\end{proposition}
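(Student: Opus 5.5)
The plan is to scale the perfectly reconstructing parameter $\bm\theta^0$ along the ray $t\bm\theta^0$, $t\to\infty$, which will show that the supremum is $0$. Non-attainment will then follow from the strict inequality $\Phi_\varepsilon^G<0$ at every finite parameter.

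\emph{Upper bound.} Since $(p_{\bm\theta}^\varepsilon)_i(x)\in(0,1)$ for every $\bm\theta$, $i$ and $x$, each term $\log\big((p_{\bm\theta}^\varepsilon)_{G(x)}(x)\big)$ is strictly negative. Hence $\Phi_\varepsilon^G(\bm\theta)<0$ for all $\bm\theta\in\R^{K\times N}$, and $\sup\Phi_\varepsilon^G\le 0$. Note that $p<1$ requires $N>1$, because the denominator $S^\varepsilon_{\bm\theta}(x)$ then contains at least one further positive term. This strict inequality already gives non-attainment once the supremum is shown to equal $0$. Gauge fixing does not change this, since by Lemma~\ref{lem:gauge} the gauge acts only by shifts that leave $\Phi_\varepsilon^G$ unchanged, and the bound $\Phi_\varepsilon^G<0$ holds everywhere.

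\emph{Lower bound.} Because $\Omega$ is finite and $G(x)$ is the strict minimiser for every $x$, the margin
\[
\gamma:=\min_{x\in\Omega}\ \min_{j\neq G(x)}\bigl(h_{\theta^0_j}(x)-h_{\theta^0_{G(x)}}(x)\bigr)
\]
satisfies $\gamma>0$. For $t>0$, linearity \eqref{eq:linear-h} gives $h_{t\theta^0_j}(x)=t\,h_{\theta^0_j}(x)$. Dividing the numerator and denominator of $(p^\varepsilon_{t\bm\theta^0})_{G(x)}(x)$ by $\exp\bigl(-\tfrac{t}{\varepsilon}h_{\theta^0_{G(x)}}(x)\bigr)$ yields
\[
-\log\big((p^\varepsilon_{t\bm\theta^0})_{G(x)}(x)\big)=\log\Bigl(1+\sum_{j\neq G(x)}e^{-\frac{t}{\varepsilon}\left(h_{\theta^0_j}(x)-h_{\theta^0_{G(x)}}(x)\right)}\Bigr)\le \log\bigl(1+(N-1)e^{-t\gamma/\varepsilon}\bigr).
\]
Averaging over $x$ gives $0>\Phi_\varepsilon^G(t\bm\theta^0)\ge -\log\bigl(1+(N-1)e^{-t\gamma/\varepsilon}\bigr)\to 0$ as $t\to\infty$. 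Equivalently, Lemma~\ref{lem:eps-scaling} lets one set $\varepsilon=1$ and absorb the factor into $t$.

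The only genuine ingredient is the strict margin $\gamma>0$. This is exactly why the hypothesis demands a strict minimiser, rather than merely ${\rm Err}^G(\bm\theta^0)=0$ with tie-breaking by smallest index. With ties, some terms would stay bounded away from zero and the supremum could fail to reach $0$ along this ray. I expect no real obstacle beyond noting this point and the finiteness of $\Omega$.
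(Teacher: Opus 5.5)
Your proof is correct and takes the same route as the paper: you scale along the ray $t\bm\theta^0$ to drive $\Phi_\varepsilon^G$ to $0$, and you use $N>1$ to get $\Phi_\varepsilon^G<0$ at every finite $\bm\theta$, so the supremum is not attained. Your margin $\gamma>0$ and the bound $-\log(1+(N-1)e^{-t\gamma/\varepsilon})$ simply make quantitative the paper's pointwise limit of the ratios $\exp(-h_{\theta^0_j}(x)/\varepsilon)/\exp(-h_{\theta^0_{G(x)}}(x)/\varepsilon)$ raised to the power $t$.
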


\begin{proof}
	Let $\bm\theta^t:=t\,\bm\theta^0$.
	For each $x$, since $G(x)$ is a strict minimiser of $h_{\theta_i^0}(x)$ among $i\in[N]$, one has
    \[
    (p_{\bm\theta^t}^\varepsilon)_{G(x)}(x)
    = \frac{1}{\displaystyle 1 + \sum_{j \ne G(x)} \left( \frac{\exp(-h_{\theta_j^0}(x)/\varepsilon)}{\exp(-h_{\theta_{G(x)}^0}(x)/\varepsilon)}\right)^t } \to 1 \quad \text{as } t \to \infty,
    \]
    hence $\Phi_\varepsilon^G(\bm\theta^t)\to 0$.
	On the other hand, for any finite $\bm\theta$ and any $x$, one has $(p_{\bm\theta}^\varepsilon)_{G(x)}(x)<1$ because $N>1$, so $\Phi_\varepsilon^G(\bm\theta)<0$ and thus the supremum cannot be attained.
\end{proof}

\begin{lemma}[Pixel misassignment has a cost]\label{lem:log2-penalty}
	Assume that $N >1$. For any $\bm\theta$ and any $x\in\Omega$, if $F_{\bm\theta}(x)\neq G(x)$ then
	\[
	(p_{\bm\theta}^\varepsilon)_{G(x)}(x)\le \frac12
	\qquad\text{and hence}\qquad
	\log\big((p_{\bm\theta}^\varepsilon)_{G(x)}(x)\big)\le -\log 2.
	\]
	Consequently,
\begin{equation}\label{eq:Phi-vs-Err}
		\Phi_\varepsilon^G(\bm\theta)\le -(\log 2)\,{\rm Err}^G(\bm \theta).
	\end{equation}
\end{lemma}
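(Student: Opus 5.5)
The proof is a pointwise comparison followed by averaging. Fix $\bm\theta$ and $x\in\Omega$ with $F_{\bm\theta}(x)=:k\neq G(x)=:g$. By definition of $F_{\bm\theta}$ as the (smallest-index) argmin, $h_{\theta_k}(x)\le h_{\theta_g}(x)$, and therefore
\[
\exp\!\left(-\tfrac1\varepsilon h_{\theta_k}(x)\right)\ \ge\ \exp\!\left(-\tfrac1\varepsilon h_{\theta_g}(x)\right).
\]
The tie-breaking rule does not matter here; only the non-strict inequality is needed.

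Next, use this in the denominator $S^\varepsilon_{\bm\theta}(x)$. Since $N>1$ and $k\neq g$, the sum defining $S^\varepsilon_{\bm\theta}(x)$ contains both the $g$-term and the $k$-term, and all terms are positive. Dropping the remaining terms and applying the inequality above gives
\[
S^\varepsilon_{\bm\theta}(x)\ \ge\ \exp\!\left(-\tfrac1\varepsilon h_{\theta_g}(x)\right)+\exp\!\left(-\tfrac1\varepsilon h_{\theta_k}(x)\right)\ \ge\ 2\exp\!\left(-\tfrac1\varepsilon h_{\theta_g}(x)\right).
\]
Dividing yields $(p^\varepsilon_{\bm\theta})_g(x)\le \tfrac12$, and monotonicity of $\log$ gives the bound $-\log 2$.

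For \eqref{eq:Phi-vs-Err}, split the sum in \eqref{eq:Phi} over the misassigned pixels $\{x: F_{\bm\theta}(x)\neq G(x)\}$ and the correctly assigned ones. On the correctly assigned pixels, bound each term $\log\big((p^\varepsilon_{\bm\theta})_{G(x)}(x)\big)$ above by $0$, since probabilities are at most $1$. On the misassigned pixels, use the bound $-\log 2$. The result is
\[
\Phi^G_\varepsilon(\bm\theta)\ \le\ -\frac{\log 2}{|\Omega|}\,\bigl|\{x\in\Omega:\ F_{\bm\theta}(x)\neq G(x)\}\bigr|\ =\ -(\log 2)\,{\rm Err}^G(\bm\theta).
\]

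There is no real obstacle; the argument is elementary. The only point needing care is that the comparison $h_{\theta_k}(x)\le h_{\theta_g}(x)$ holds regardless of the tie-breaking convention, so ties are harmless.
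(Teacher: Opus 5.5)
Your proposal is correct and follows essentially the same route as the paper: bound the denominator $S^\varepsilon_{\bm\theta}(x)$ below by the two terms indexed by $G(x)$ and $F_{\bm\theta}(x)$, use $h_{\theta_{F_{\bm\theta}(x)}}(x)\le h_{\theta_{G(x)}}(x)$ to get the factor $\tfrac12$, and then average over $\Omega$. You also write out the averaging step behind \eqref{eq:Phi-vs-Err}, which the paper leaves implicit.
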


\begin{proof}
If $F_{\bm\theta}(x)\neq G(x)$, then
\[
(p_{\bm\theta}^\varepsilon)_{G(x)}(x)
\le \frac{\exp(-h_{\theta_{G(x)}}(x)/\varepsilon)}{\exp(-h_{\theta_{G(x)}}(x)/\varepsilon) + \exp(-h_{\theta_{F_{\bm \theta}(x)}}(x)/\varepsilon)}
\le \frac 12
\]
as required.
\end{proof}

The following is a direct consequence.

\begin{corollary}[Near-optimality implies (near-)perfect reconstruction]\label{cor:near-opt}
	For any $\delta>0$,
	\[
	\Phi_\varepsilon^G(\bm\theta) > - \delta
    \quad\Longrightarrow\quad
	{\rm Err}^G(\bm\theta)<\frac{\delta}{\log 2}.
	\]
	In particular, if $\delta<\frac{\log 2}{|\Omega|}$, then
    \[
    \Phi_\varepsilon^G(\bm\theta) > - \delta
    \]
    implies ${\rm Err}^G(\bm\theta)=0$,
	i.e., $F_{\bm\theta}(x)=G(x)$ for all $x\in\Omega$.
\end{corollary}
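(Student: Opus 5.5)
The plan is to derive the corollary directly from Lemma~\ref{lem:log2-penalty}, specifically from inequality \eqref{eq:Phi-vs-Err}, with one contrapositive-style chain of inequalities followed by an integrality argument. No use of concavity or of the $\varepsilon$-scaling is needed, and the argument holds for any fixed $\varepsilon>0$ and any $\bm\theta$.

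First, assume $\Phi_\varepsilon^G(\bm\theta) > -\delta$. Combining this with \eqref{eq:Phi-vs-Err} gives
\[
-\delta < \Phi_\varepsilon^G(\bm\theta) \le -(\log 2)\,{\rm Err}^G(\bm\theta).
\]
Dividing by $-\log 2<0$ reverses the inequality and yields ${\rm Err}^G(\bm\theta) < \delta/\log 2$, which is the first claim. The standing assumption $N>1$ from Section~\ref{sec:setting-notation} ensures that Lemma~\ref{lem:log2-penalty} applies.

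For the second claim, I will use that ${\rm Err}^G(\bm\theta)$ takes values only in $\{0, 1/|\Omega|, 2/|\Omega|, \dots, 1\}$, because it is a count of misassigned pixels divided by $|\Omega|$. If $\delta < \log 2/|\Omega|$, the first part gives ${\rm Err}^G(\bm\theta) < 1/|\Omega|$. The only admissible value below $1/|\Omega|$ is $0$, so ${\rm Err}^G(\bm\theta)=0$, which means $F_{\bm\theta}(x)=G(x)$ for every $x\in\Omega$.

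There is no real obstacle here. The only points needing care are the sign flip when dividing by $-\log 2$ and the observation that ${\rm Err}^G$ lives on the lattice $\frac{1}{|\Omega|}\mathbb{Z}$; that observation is what turns the strict bound into exact recovery.
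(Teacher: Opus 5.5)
Your proposal is correct and follows the same route as the paper, which states the corollary as a direct consequence of Lemma~\ref{lem:log2-penalty}: you combine the hypothesis with $\Phi_\varepsilon^G(\bm\theta)\le -(\log 2)\,{\rm Err}^G(\bm\theta)$. Your observation that ${\rm Err}^G(\bm\theta)\in\frac{1}{|\Omega|}\mathbb{Z}$ is exactly the step the paper leaves implicit, and it is what gives exact recovery when $\delta<\log 2/|\Omega|$.
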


Note that this in particular implies that if a perfect reconstruction is possible, then any approximately optimal $\bm\theta$ achieves it.

\begin{proposition}[Imperfect reconstruction case: existence of a maximiser]\label{prop:existence-nonseparable}
	If there is no $\bm\theta$ such that ${\rm Err}^G(\bm\theta) = 0$ (which is equivalent to $F_{\bm\theta}(x)=G(x)$ for all $x\in\Omega$), then $\Phi_\varepsilon^G$ admits a maximiser (which is not unique due to the gauge invariance). Assume in addition that the set $\{\eta(x) : x \in \Omega\}$ spans $\mathbb{R}^K$. Then $\Phi_\varepsilon^G$ has a unique maximiser on the subspace $\{ (c,\ldots,c) : c \in \mathbb{R}^K \}^\perp$. Consequently $\Phi_\varepsilon^G$ also has a unique maximiser on the gauge-fixed subspace $\{ \bm \theta \in \mathbb{R}^{K \times N}: \theta_N = 0\}$.
\end{proposition}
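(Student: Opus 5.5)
The plan is a coercivity argument: show that $\Phi_\varepsilon^G$ tends to $-\infty$ along every direction transverse to the gauge subspace. Existence then follows from continuity, and uniqueness from Proposition~\ref{prop:strict-concavity}.

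By Lemma~\ref{lem:eps-scaling} we may take $\varepsilon=1$. By Lemma~\ref{lem:gauge}, $\Phi_1^G$ is invariant along $V:=\{(c,\dots,c):c\in\mathbb{R}^K\}$, so it suffices to maximise over any complement of $V$. Since $\Phi_1^G$ is concave and continuous, its superlevel set $U:=\{\bm\theta\in V^\perp:\Phi_1^G(\bm\theta)\ge\Phi_1^G(0)=-\log N\}$ is closed, convex and contains $0$. I will show that $U$ contains no ray from $0$, i.e.\ that its recession cone is $\{0\}$; then $U$ is compact and a maximiser exists by continuity.

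Suppose instead that $t\mathbf{v}\in U$ for all $t\ge0$, for some $\mathbf{v}\in V^\perp\setminus\{0\}$. Write $\Phi_1^G(t\mathbf{v})=-\frac{1}{|\Omega|}\sum_x\log\bigl(1+\sum_{j\ne G(x)}e^{-t(v_j-v_{G(x)})\cdot\eta(x)}\bigr)$.
\begin{enumerate}[label=(\roman*)]
\item If some $x$ and $j$ satisfy $(v_j-v_{G(x)})\cdot\eta(x)<0$, the corresponding term grows like $t\,|(v_j-v_{G(x)})\cdot\eta(x)|$, so $\Phi_1^G(t\mathbf{v})\to-\infty$. This contradicts $t\mathbf{v}\in U$.
\item Otherwise $v_{G(x)}\cdot\eta(x)\le v_j\cdot\eta(x)$ for all $x,j$, so $\mathbf{v}$ weakly separates $G$.
\end{enumerate}

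\textbf{Main obstacle: case (ii).} Non-reconstructibility only forbids exact reproduction of $G$ with the tie-breaking rule, whereas $\mathbf{v}$ may have ties. The plan is to exploit convexity of the recession cone together with concavity. If $\mathbf{v}$ is a recession direction of $U$, then $\bm\theta+t\mathbf{v}\in U$ for every $\bm\theta\in U$, which in particular makes $t\mapsto\Phi_1^G(\bm\theta+t\mathbf{v})$ nondecreasing for every $\bm\theta$. Consider the pixels tied for $\mathbf{v}$; I would split them into those where the tie is strict for $G(x)$ and those that are not.

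The delicate point is whether non-strict weak separation can survive imperfect reconstruction. I would try to handle this by interpreting the hypothesis, as in Proposition~\ref{prop:sep-sup}, as excluding even \emph{weak} separation with $\mathbf{v}\notin V$, i.e.\ the existence of a nonzero transverse $\mathbf{v}$ with $v_{G(x)}\cdot\eta(x)\le v_j\cdot\eta(x)$ for all $x,j$. Under that reading, case (ii) reduces to $\mathbf{v}\in V$. If the literal hypothesis is weaker, I would reduce to the strict case by perturbing: combine $\mathbf{v}$ with a small multiple of a second recession direction to break ties, with the aim of producing a strict perfect reconstruction and hence a contradiction.

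With case (ii) disposed of, the spanning assumption does not enter coercivity, since $\mathbf{v}\notin V$ was used directly. It is used only for uniqueness. By Proposition~\ref{prop:strict-concavity}, $\Phi_1^G$ is strictly concave on $V^\perp$, so the maximiser there is unique. Finally, the map $\bm\theta\mapsto\bm\theta-(\theta_N,\dots,\theta_N)$ is a linear bijection from $V^\perp$ onto $\{\theta_N=0\}$. It preserves $\Phi_\varepsilon^G$ by Lemma~\ref{lem:gauge}, so uniqueness transfers to the gauge-fixed subspace.
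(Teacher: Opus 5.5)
\textbf{The gap is case (ii), and the statement as written is false.} You located the crux correctly. However, case (ii) cannot be closed under the literal hypothesis, and the perturbation fallback has nothing to work with. Weak separation with ties (quasi-complete separation in the sense of \cite{albert1984existence}) can coexist with ${\rm Err}^G(\bm\theta)>0$ for every $\bm\theta$, so the first claim fails as stated.

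Here is a counterexample. Take $N=2$ and $\eta(x)=(x_{,1},x_{,2},1)$, which spans $\mathbb{R}^3$ on $\Omega$. Choose a column of pixel centres $\{x_{,1}=c\}$ with pixels on both sides. Let $G(x)=1$ if $x_{,1}<c$ and $G(x)=2$ if $x_{,1}>c$. On the column, give three consecutive pixels $a,m,b$ the labels $1,2,1$; note $m=\frac12(a+b)$. The other column pixels can be labelled arbitrarily.
\begin{enumerate}[label=(\alph*)]
\item \emph{No perfect reconstruction.} Since ties go to index~$1$, $F_{\bm\theta}=G$ would force the affine function $\ell:=(\theta_1-\theta_2)\cdot\eta$ to satisfy $\ell(a),\ell(b)\le 0<\ell(m)$. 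This is impossible because $\ell(m)=\frac12(\ell(a)+\ell(b))$.
\item \emph{No maximiser.} Take $v_1=(1,0,-c)$ and $v_2=0$. Then $(v_2-v_1)\cdot\eta(x)=c-x_{,1}\ge 0$ on $G_1$ and $(v_1-v_2)\cdot\eta(x)=x_{,1}-c\ge 0$ on $G_2$, with strict inequality off the column. So every pixel's term of $\Phi_\varepsilon^G(\bm\theta+t\mathbf{v})$ is nondecreasing in $t$, and strictly increasing off the column. Hence $t\mapsto\Phi_\varepsilon^G(\bm\theta+t\mathbf{v})$ is strictly increasing for \emph{every} $\bm\theta$.
\end{enumerate}
The tied labels on the column are interleaved, so no second direction can break the ties.

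\textbf{What the classical results actually assume.} The paper's proof only cites \cite{albert1984existence,santner1986note}, plus Proposition~\ref{prop:strict-concavity} for uniqueness. The hypothesis in those references is overlap, which is exactly your alternative reading: no direction outside the invariance subspace makes $G(x)$ a (possibly tied) minimiser of $i\mapsto v_i\cdot\eta(x)$ for all $x\in\Omega$.

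Under that corrected hypothesis your argument is complete and correct. The recession-cone compactness of $U$, case (i), strict concavity on $V^\perp$, and the linear bijection $V^\perp\to\{\theta_N=0\}$ all go through. Together they give a self-contained proof of what the paper only cites.

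One refinement is needed. Any $\mathbf{v}$ with $v_j-v_1\perp\eta(x)$ for all $j$ and all $x\in\Omega$ is trivially weakly separating. So stating your reading with $V$ silently forces $\{\eta(x)\}$ to span $\mathbb{R}^K$. For the existence claim without spanning, replace $V$ by $W:=\{\mathbf{v}: v_i\cdot\eta(x)=v_j\cdot\eta(x)\ \forall i,j\in[N],\ x\in\Omega\}$, along which $\Phi_\varepsilon^G$ is constant. Then assume every weakly separating direction lies in $W$, and run the same compactness argument on $W^\perp$.
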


\begin{proof}
	See \cite{albert1984existence,santner1986note} and Proposition~\ref{prop:strict-concavity}.
\end{proof}

\section{Numerical experiments}\label{sec:numerics}

\subsection{Implementation and computational setting}
In all the experiments we optimise the multinomial logistic objective function $\Phi_\varepsilon^G$ defined in \eqref{eq:Phi}
under the linear parametrisation \eqref{eq:linear-h}, using Legendre polynomial diagrams (Section~\ref{sec:higher-d-d}) of degrees ${d\in\{1,\dots,7\}}$. We use the gauge fixing from Remark~\ref{rem:gauge-fix} by setting $\theta_N\equiv 0$ throughout.

The implementation is part of the GPU-oriented library \textsc{PyAPD} \cite{PyAPD_paper,PyAPD_github}.
The evaluation of $\Phi_\varepsilon^G$ and its gradient uses a \textsc{PyKeOps} backend \cite{CFGCD21}, avoiding explicit formation of the full \mbt{ cost matrix 
\[
\bigl(h_{\theta_i}(x)\bigr)_{i\in[N],\,x\in\Omega}\in\mathbb{R}^{N\times |\Omega|},
\]
whose $(i,x)$-entry is the cost of assigning pixel $x$ to grain $i$.}
All runs were performed on a single NVIDIA A100 GPU using double precision arithmetic.  As discussed in \cite{PyAPD_paper}, for a combination of $N$ (number of grains), $|\Omega|$ (number of pixels) and $d$ (degree of the diagram) large enough, our implementation ensures near 100\% utilisation of the GPU.

All data used and generated in the numerical experiments is available in a reproducible manner on GitHub through our library, PyAPD \cite{PyAPD_github}.

\subsection{Optimisation protocol}
To showcase the robustness of the method, we first initialise at the fully ambiguous state $\bm\theta^{(0)}=\bm 0$, which implies that $(p_{\bm\theta^{(0)}}^\varepsilon)_i\equiv 1/N$. A second initialisation choice comes from the heuristic guess from \cite{TR18} -- see Section~\ref{sec:heuristic_guess} for details. 

We maximise $\Phi_\varepsilon^G$ using the L-BFGS method \cite{LN89}. In all cases we rely on the automatic differentiation available in \textsc{PyTorch} to compute derivatives. This is known to be faster than using the explicit formulas 
from Proposition~\ref{prop:concavity-grad-hess} \cite{PyAPD_paper}. Reflecting the fact that $\Phi_{\varepsilon}^G$ becomes very flat near the optimum, we always run the optimisation for a fixed moderate number of iterations, either 1000 or 2000, and fix the regularisation parameter to be $\e = 10^{-2}$. A robust \emph{annealing schedule} \cite{van1987simulated} of systematically decreasing $\e$ and a bespoke stopping criterion will be considered in future work (see also the last paragraph of Section~\ref{sec:role-of-eps} for a related discussion).

\subsection{Datasets and common discretisation}
We always take $\overline\Omega=[-1,1]^2$ with discretisation $\Omega$, either as in Section~\ref{sec:setting-notation} or provided by the EBSD dataset. A grain map $G:\Omega\to[N]$ is provided either synthetically
(from a known power diagram or anisotropic power diagram), with discretisation parameter $M$ chosen as suggested in \cite[Table~1]{PyAPD_paper}, or from experimental EBSD data
(pre-processing in \textsc{MTEX} \cite{MTEX}).

\subsection{Reconstruction of (anisotropic) power diagrams}\label{sec:recon-apd}
To show that the method works as expected, we first consider synthetic grain maps $G$ generated by (A) a power diagram (PD) and (B)--(E) anisotropic power
diagrams (APD) with increasing anisotropy (elongation of the grains). These experiments are in the perfect reconstruction regime: there exists $\bm\theta^\star$ such that
$F_{\bm\theta^\star}(x)=G(x)$ for all $x\in\Omega$,
hence $\sup\Phi_\varepsilon^G=0$ is not attained (Proposition~\ref{prop:sep-sup}) and we look for a near optimum achieving perfect reconstruction (Corollary~\ref{cor:near-opt}). \mbt{In practice, to counter the increasing cost of L-BFGS iterates as $\|\bm \theta\|$ grows, we impose a fixed budget of $2000$ iterations and accept a (negligibly) small value ${\rm Err}^G$.} The results are presented in Figure~\ref{fig:1}.

\begin{figure}[t]
    \centering
\includegraphics[width=\textwidth,height=0.99\textheight,keepaspectratio]{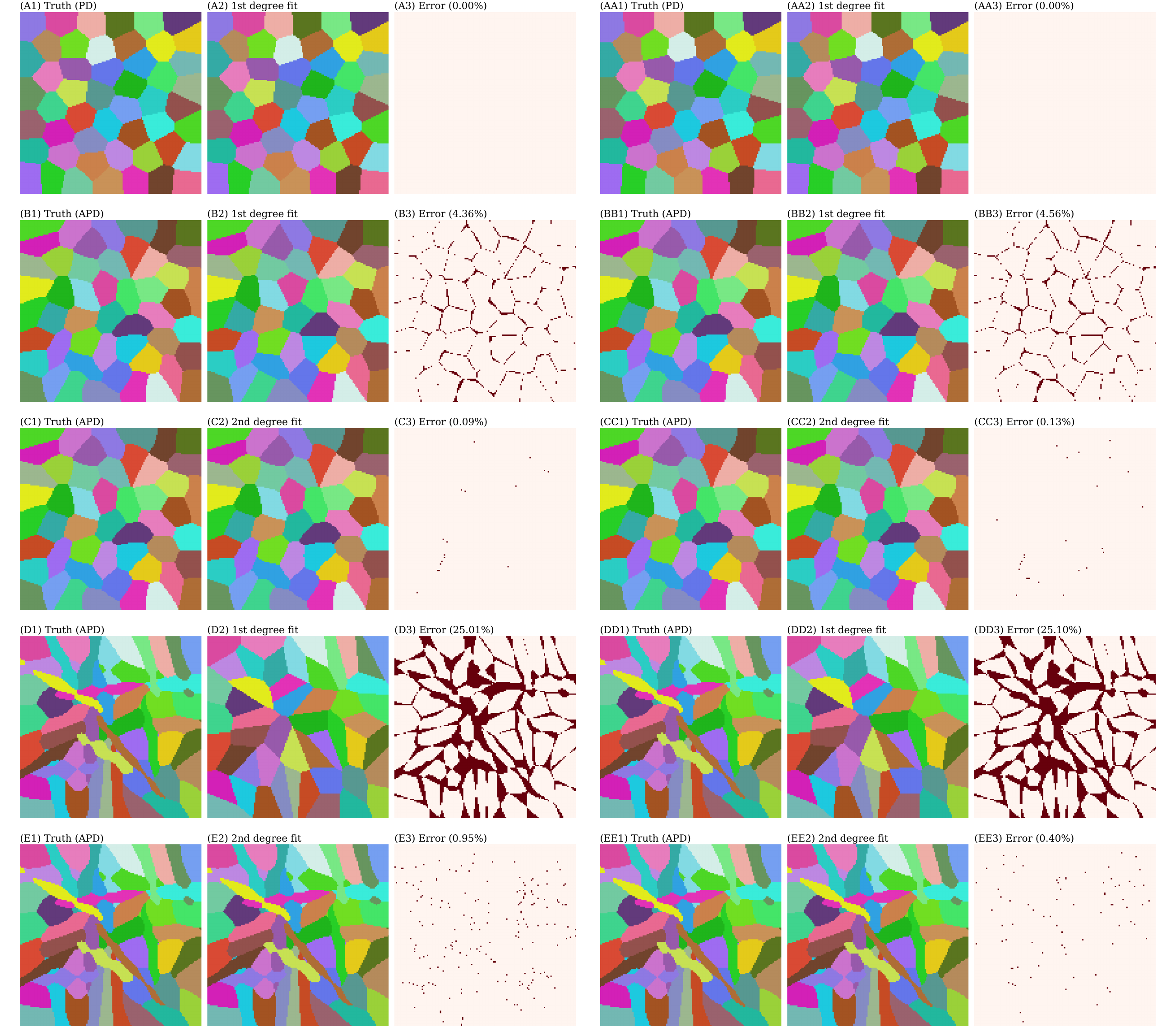}
    \caption{The reconstruction of (anisotropic) power diagrams as described in  Section~\ref{sec:recon-apd}. Column~1: Ground truth. The given grain map $G$ is generated by a power diagram (PD) or anisotropic power diagram (APD), where $G(x)=i$ if pixel $x$ is in cell $i$. The colours correspond to the assignment of pixels. Note that (B1)=(C1) is an APD with low anisotropy and (D1)=(E1) is an APD with high anisotropy. Column 2: A power diagram (1st degree fit) or anisotropic power diagram (2nd degree fit) is fitted to the grain map $G$ by approximately maximising $\Phi^G_{\e}$ with initial guess $\bm \theta^{(0)}=\mathbf{0}$. Column 3: The misassignment error. The light pink pixels are correctly assigned, the dark pixels are incorrectly assigned. Columns 4--6: The same as columns 1--3 but with the initial guess $\bm \theta^{(0)} = \bm \theta'$ from Section~\ref{sec:heuristic_guess}. In each case $\e = 0.01$, there are $N= 50$ cells and $|\Omega| = 19,600$ pixels, and the algorithm was run for 2000 iterations and took around 10-41 seconds.}
    \label{fig:1}
\end{figure}

\subsection{Fitting to EBSD grain maps}\label{sec:fit-ebsd}
To showcase the true strength of the proposed framework, we next fit Legendre polynomial diagrams to EBSD-derived grain maps. In contrast to the synthetic setting of Section~\ref{sec:recon-apd}, the EBSD grain boundaries are not expected to lie exactly in the algebraic class induced by a fixed degree $d$, so the problem only allows for imperfect reconstruction and a maximiser exists (Proposition~\ref{prop:existence-nonseparable}). We work with two EBSD datasets provided to us by our industrial partner, Tata Steel %Europe
\db{Netherlands}. 
The first, referred to as the \emph{small EBSD dataset}, yields a grain map $G\,\colon\,\Omega \to [N]$ with $N=245$ grains and $|\Omega| = 63,252$ pixels. The second, referred to as the \emph{big EBSD dataset}, results in a grain map with $N = 4686$ and $|\Omega| = 1,033,376$ pixels. The results are presented in Figure~\ref{fig:2} and Figure~\ref{fig:3}.

\begin{figure}[t]
    \centering
    \includegraphics[width=\textwidth,height=0.82\textheight,keepaspectratio]{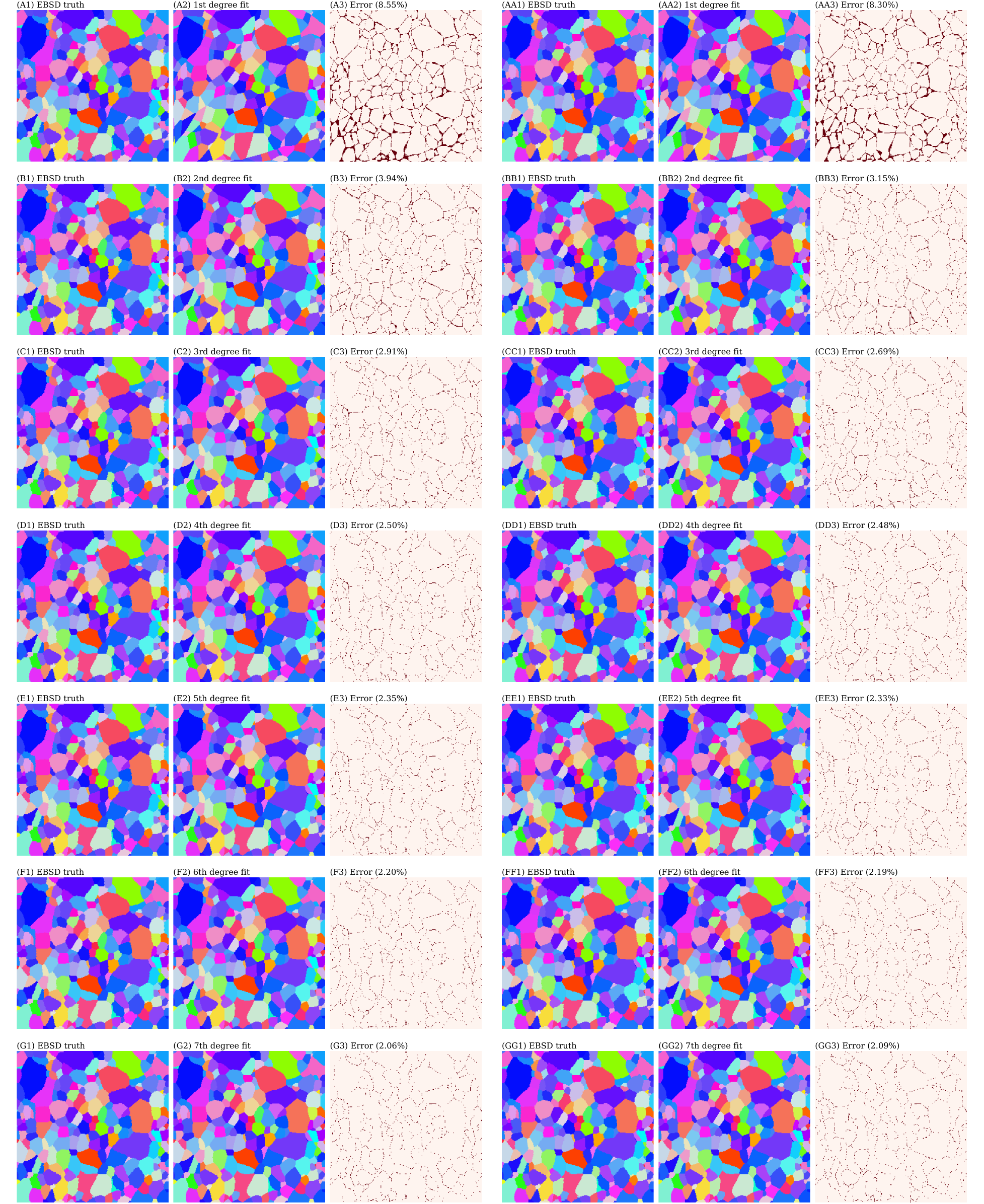}
    \caption{The fitting of polynomial diagrams to the small EBSD dataset as described in Section~\ref{sec:fit-ebsd}. 
    Column 1: Ground truth. The given grain map $G$ comes from an EBSD image of steel, where $G(x)=i$ if pixel $x$ is in grain $i$. The colours correspond to the assignment of pixels. Column 2, row $d$: A polynomial diagram of degree $d$, for $d \in [7]$, is fitted to the grain map $G$ by approximately maximising $\Phi^G_{\e}$ with initial guess $\bm \theta^{(0)}=\mathbf{0}$. Column 3: The misassignment error. The light pink pixels are correctly assigned, the dark pixels are incorrectly assigned. Columns 4--6: The same as columns 1--3 but with the initial guess $\bm \theta^{(0)} = \bm \theta'$ from Section~\ref{sec:heuristic_guess}. In each case $\e = 0.01$, there are $N= 245$ grains and $|\Omega| = 63,252$ pixels, and the algorithm was run for 1000 iterations and took around 25-64 seconds (with run time increasing with $d$).}
    \label{fig:2}
\end{figure}

\begin{figure}[t]
    \centering
    \includegraphics[width=\textwidth,height=0.82\textheight,keepaspectratio]{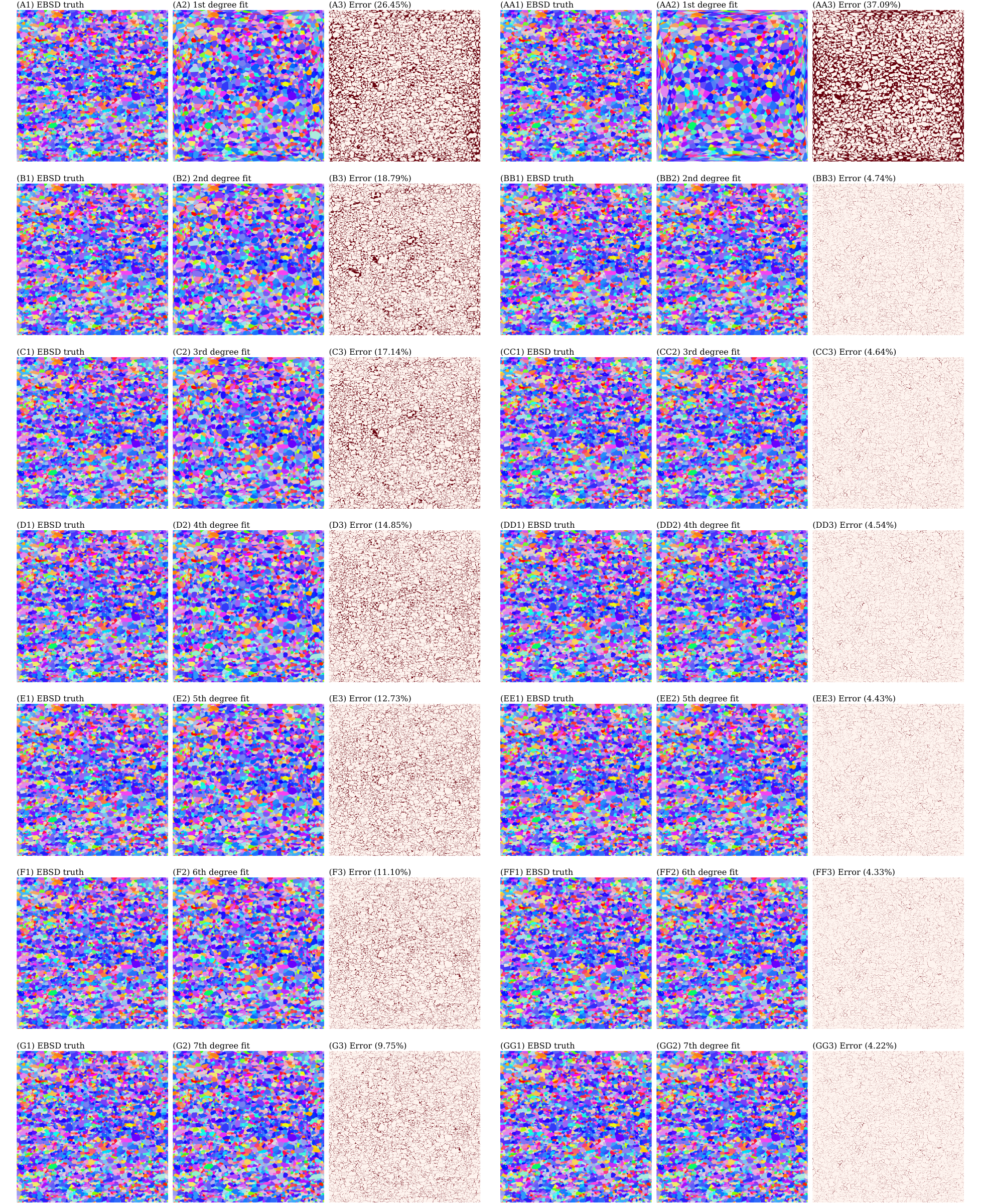}
    \caption{The fitting of polynomial diagrams to the big EBSD dataset as described in Section~\ref{sec:fit-ebsd}. 
    Column 1: Ground truth. The given grain map $G$ comes from an EBSD image of steel, where $G(x)=i$ if pixel $x$ is in grain $i$. The colours correspond to the assignment of pixels. Column 2, row $d$: A polynomial diagram of degree $d$, for $d \in [7]$, is fitted to the grain map $G$ by approximately maximising $\Phi^G_{\e}$ with initial guess $\bm \theta^{(0)}=\mathbf{0}$. Column 3: The misassignment error. The light pink pixels are correctly assigned, the dark pixels are incorrectly assigned. Columns 4--6: The same as columns 1--3 but with the initial guess $\bm \theta^{(0)} = \bm \theta'$ from Section~\ref{sec:heuristic_guess}. In each case $\e = 0.01$, there are $N= 4686$ grains and $|\Omega| = 1,033,376$ pixels, and the algorithm was run for 1000 iterations and took around 4.5-16 minutes (with run time increasing with $d$).
    }
    \label{fig:3}
\end{figure}

\subsection{Discussion}\label{sec:discussion}
We now report on several important aspects illuminated in our numerical experiments.
\subsubsection{Degree $d$.}\label{sec:degree_choice}
Increasing $d$ increases the number of parameters per grain to
\(
K_d=\frac{(d+1)(d+2)}{2}
\)
and permits more complex algebraic interfaces (Remark~\ref{rem:geom-boundaries}).
Empirically, the optimal value of the objective function $\Phi_\varepsilon^G$ improves monotonically with $d$, while the reconstruction accuracy on EBSD data improves up to a problem-dependent saturation degree. The improvement is mathematically obvious, since any lower-degree fit is an admissible higher-degree fit (with additional parameters set to zero).

A simple quantity that helps assess the trade-off between model complexity and data size is the \emph{compression ratio}
\begin{equation}\label{eqn:compression}
{\rm Compr}(d,N,|\Omega|)\;:=\;\frac{K_d\,N}{3|\Omega|}.
\end{equation}
Interpreting a labelled pixel set naively as storing, per pixel, two coordinates and one label,
a grain map requires $3|\Omega|$ scalar values, whereas a fitted degree-$d$ diagram is specified by $K_dN$ coefficients. \mbt{This convention is also reasonable in practice for EBSD data, where grain maps are often handled as unstructured pixel lists (for instance after removing small grains or leaving unindexed regions as gaps), so that the pixel locations are stored explicitly together with the grain labels}. We report on the compression ratio for
our EBSD experiments in Table~\ref{table:compression}.

\begin{table}[h!]
	\begin{center}
		\begin{tabular}{|l||c|c|c|c|c|c|c|c|}
			\hline
			\diagbox{data set}{Compr}{degree $d$} & 1 & 2 & 3 & 4 & 5 & 6 & 7 & 8 \\
			\hline
			Small EBSD & 0.39\% & 0.77\% & 1.29\% & 1.94\% & 2.71\% & 3.62\% & 4.65\% & 5.81\%  \\
			Big EBSD & 0.45\% & 0.91\% & 1.51\% & 2.27\% & 3.17\% & 4.23\% & 5.44\% & 6.80\%  \\
			\hline
		\end{tabular}
	\end{center}
	\caption{The compression ratio \eqref{eqn:compression} for the small and big EBSD datasets when reconstructed with a polynomial diagram of degree $d$ for $d \in [8]$.}
\label{table:compression}
\end{table}

\subsubsection{Moment-based heuristic initialisation for PDs and APDs.}\label{sec:heuristic_guess}
For the cases $d=1$ and $d=2$, an alternative to the fully ambiguous initialisation ${\bm \theta}^{(0)} = \bm 0$ is to use the well-known moment-based heuristic from~\cite{TR18}, which was derived for
the standard description of power diagrams in terms of $(\bm w,\bm y)$ and anisotropic power diagrams in terms of $(\bm w,\bm y, \bm{\mathrm{A}})$ (c.f.~Section~\ref{sec:rel-std-diagrams}), rather than in terms of $\bm \theta$. For a given grain map ${G:\Omega\to[N]}$, it can be computed as follows. First we compute first moments (centroids)
\begin{equation}\label{eq:centroid-guess}
	y'_i \;:=\; \frac{1}{|G_i|}\sum_{x\in G_i} x 
\end{equation}
and the (central) second-moment matrices
\begin{equation}\label{eq:cov-guess}
	(\mathrm{B}'_i)_{kl}
	\;:=\;
	\frac{1}{|G_i|}\sum_{x\in G_i} (x-y'_i)_k\,(x-y'_i)_l,
	\qquad k,l\in[2].
\end{equation}
\mbt{If $\mathrm{B}'_i$ is invertible, the anisotropy heuristic suggested in \cite{TR18} is}
\begin{equation}\label{eq:A-guess}
	\mathrm{A}'_i \;:=\; (\mathrm{B}'_i)^{-1},
\end{equation}
and the weight heuristic is
\begin{equation}
\label{eq:w-guess}    
w'_i = \frac{\sqrt{{\rm det}\mathrm{A}'_i}|G_i|}{|\Omega|\pi}
\end{equation}
(in the case of a PD, we take $\mathrm{A}_i' = {\rm Id}$ in \eqref{eq:w-guess}). Note that $w'_i$ is the ratio of the `area' of the grain $G_i$ divided by the area of the ellipse corresponding to $B_i'$. 
These heuristics give rise to an initial guess $({\bm w'},{\bm y'})$ for a PD and $({\bm w'},{\bm y'}, {\bm {\mathrm A}'})$ for an APD, which can then be converted to the linear
parameter representation ${\bm \theta'}$ via the transformations in Section~\ref{sec:rel-std-diagrams} and also to the Legendre basis as discussed in Remark~\ref{rem:basis_change}.

We advocate using this heuristic guess as an initialisation (even for $d \geq 3$ by setting higher degree coefficients to zero), provided that $\e$ is chosen small enough (see Section~\ref{sec:role-of-eps}). The benefit is clearly seen in Figure~\ref{fig:3},
where for $d \geq 2$ starting from the heuristic guess leads to a much reduced error in pixel assignment.
\subsubsection{Recovering a physically-relevant standard description.}
\label{sec: PD constraint}
When $d\in\{1,2\}$ we may optimise in the linear coefficient representation
$\bm\theta=(\theta_1,\dots,\theta_N)$ (with a gauge fix $\theta_N\equiv 0$), and then convert a posteriori
to the standard geometric parameters $({\bm w},{\bm y})$ (power diagrams) or $(\bm w,\bm y, \bm{\mathrm{A}})$ (anisotropic power diagrams) via equations \eqref{pd-lr-to-phys} and \eqref{apd-lr-to-phys}. One should keep in mind that this conversion is \emph{not canonical}: by Lemma~\ref{lem:gauge} the diagram depends only on the
equivalence class $\bm\theta \sim \bm\theta + (c,\dots,c)$, 
since adding the same $c\in\R^K$ shifts all scores by the same
function $x\mapsto c\cdot\eta(x)$ and hence does not change the argmin. Consequently, the recovered physical parameters are defined only up to such a common shift. Note that the diagram is also invariant under multiplicative rescaling, $\bm\theta \sim \lambda \bm \theta$, for any $\lambda > 0$, but this transformation does not leave the objective function $\Phi^G_{\e}$ invariant (unless $\e$ is also rescaled\mbt{, see Lemma~\ref{lem:eps-scaling}}). This is discussed further in Section \ref{sec:role-of-eps}.

Recall that for $d=2$ (APDs) the anisotropy matrices $\mathrm{A}_i$ are positive definite. However, after imposing the optimisation gauge $\theta_N=0$, the matrix $\mathrm{A}_N=\mathbf{0}$ is not positive definite, even though the induced diagram $\{L_i\}$ (defined in \eqref{eq:Li}) is perfectly meaningful. Similarly, maximising $\Phi^G_{\e}$ over all of $\mathbb{R}^{6 \times N}$ without gauge-fixing may lead to some $A_i$ that are not positive definite. However, the
gauge freedom can be used to enforce a physically-admissible anisotropy (i.e., $\mathrm{A}_i$ positive definite for all $i$) in post-processing.
Indeed, in the monomial basis \eqref{h-APD} the matrix $\mathrm{A}_i$
is read off from the quadratic coefficients as follows:
\[
\mathrm{A}_i
=\begin{pmatrix}\theta_{i,1}&\tfrac12\theta_{i,2}\\[1pt]\tfrac12\theta_{i,2}&\theta_{i,3}\end{pmatrix}.
\]
Adding to every $\theta_i$ a vector $c \in \mathbb{R}^6$ of the form $c=(c_1,c_2,c_3,0,0,0)$ amounts to replacing every
$A_i$ by $A_i+C$, where $C$ is the same symmetric matrix for all grains, while leaving the diagram $\{L_i\}$ (defined in \eqref{eq:Li}) unchanged. Choosing $C=\lambda I$ with 
\[
\lambda>-\min_{i \in [N]}\lambda_{\min}(\mathrm{A}_i),
\]
where $\lambda_{\min}(\mathrm{A}_i)$ denotes the smallest eigenvalue of $\mathrm{A}_i$, ensures that $\mathrm{A}_i+C$ is positive definite for all $i$. 
One then recomputes $(y_i,w_i)$ from \eqref{apd-lr-to-phys} using the shifted
coefficients. In summary, this procedure leaves the diagram $\{L_i\}$ unchanged while enforcing the constraint that the anisotropy matrices are positive definite, even though we solve the \emph{unconstrained} optimisation problem $\max \Phi^G_{\e}$. This observation was made in \cite{AFGK23}.
If optimisation is carried out in a different polynomial basis, e.g.\ Legendre, the same procedure applies after converting $\theta$ to the monomial basis. 

For $d=1$ (PDs), the same phenomenon appears in a simpler form: a common shift of the linear coefficients corresponds to a common translation of all recovered seeds $y_i$ (and a corresponding adjustment of the weights), again without changing the
diagram. This invariance is well-known in the optimal transport literature; see, e.g., \cite[Prop.~6]{Meyron2019}.

\subsubsection{Practical role of the regularisation parameter $\varepsilon$.}\label{sec:role-of-eps}
Lemma~\ref{lem:eps-scaling} shows that $\varepsilon$ can be absorbed into a rescaling of parameters:
\[
\Phi_\varepsilon^G(\bm\theta)=\Phi_1^G(\bm\theta/\varepsilon),
\]
and hence (assuming a maximiser exists) maximisers satisfy
\[
\bar{\bm\theta}\in\arg\max \Phi_1^G
\quad\Longrightarrow\quad
\varepsilon\,\bar{\bm\theta}\in\arg\max \Phi_\varepsilon^G.
\]
Separately, the induced hard assignment satisfies:
\[
F_{\varepsilon\bm\theta}(x)=F_{\bm\theta}(x)\qquad(\varepsilon>0),
\qquad\text{and hence}\qquad
{\rm Err}^G(\varepsilon\bm\theta)={\rm Err}^G(\bm\theta).
\]
Thus, for the exact problem in exact arithmetic, $\varepsilon$ carries no modelling content.

In computation, however, $\varepsilon$ has a pronounced effect on the optimisation trajectory. Empirically, the heuristic initialisation ${\bm \theta^{(0)}=\,\bm \theta'}$ from Section~\ref{sec:heuristic_guess} may yield a good hard reconstruction (small ${\rm Err}^G({\bm\theta'})$) while the corresponding values $h_{\theta'_i}(x)$ exhibit only weak separation between the winning index and its competitors on substantial subsets of pixels. With $\varepsilon\sim 1$ this typically leads to early L-BFGS iterates that substantially rearrange interfaces, and ${\rm Err}^G$ can increase markedly before decreasing again; in all the numerical experiments considered, this transient deterioration effectively removes the advantage of the heuristic relative to the fully ambiguous start $\bm\theta^{(0)}=\bm 0$.

By contrast, taking $\varepsilon\ll 1$ makes the initial iterate behave more deterministically and we observe a much more stable evolution of ${\rm Err}^G$, frequently with near-monotone decrease, albeit sometimes with slower progress in $\Phi_\varepsilon^G$ once the reconstruction is already good. 
Importantly, this behaviour can be reproduced while keeping $\varepsilon=1$ by rescaling the initial parameters: since $\Phi_\varepsilon^G(\bm\theta)=\Phi_1^G(\bm\theta/\varepsilon)$, running at small $\varepsilon$ from ${\bm\theta'}$ is equivalent (for the objective and its derivatives) to running at $\varepsilon=1$ from the enlarged initialisation ${\bm\theta'}/\varepsilon$.
Designing robust annealing strategies based on this scaling (and stopping criteria aligned with ${\rm Err}^G$) is an algorithmic issue and will be pursued in future work.

\section{Conclusions and outlook}\label{sec:conclusion}
We formulated a framework of parametrised minimisation diagrams (PMDs) and focused on the linear class $h_{\theta_i}(x)=\theta_i\cdot\eta(x)$, for which the logistic regression objective function $\Phi_\varepsilon^G$ is concave, and strictly concave after gauge fixing. Standard power diagrams and anisotropic power diagrams arise as low-degree instances of this framework, and polynomial diagrams provide a systematic extension in which cell interfaces are algebraic curves of controlled degree, which is the main novelty of this work in the context of microstructure modelling. 

On the computational side, the concave objective function admits reliable large-scale optimisation. In the perfect reconstruction regime (synthetic PD/APD ground truth) the supremum of $\Phi_\varepsilon^G$ is not attained, but nevertheless near-optimal approximate maximisers yield a perfect reconstruction (no misassigned pixels). In the imperfect reconstruction regime, typically arising when dealing with EBSD data, a maximiser exists, and increasing the polynomial degree improves accuracy but with diminishing returns. 

Several directions for further work appear natural. First, while degrees $d\ge 3$ empirically fit complex experimental boundaries well, their geometric and physical interpretation is less clear than for $d=1$ (PD) and $d=2$ (APD); understanding what classes of boundary regularity and anisotropy are efficiently represented by higher-degree 
diagrams is an open question. Second, the linear framework is not restricted to polynomial features: one may enrich the design function $\eta$ with additional basis functions (e.g., trigonometric or radial functions) to capture oscillatory or multiscale boundaries, while preserving the concavity of $\Phi_\varepsilon^G$.
Finally, incorporating a physically-motivated regularisation term in the objective function (e.g., penalising curvature \mbt{or encouraging mild shape constraints on the cells, such as approximate convexity or star-shapedness)} could improve robustness on noisy EBSD segmentations and facilitate the interpretation of fitted parameters.

\mbt{Another possible perspective is to view the present fitting problem as a semi-discrete inverse optimal transport problem. Indeed, in semi-discrete optimal transport, Laguerre cells arise from transporting 
%a
\db{an absolutely} 
continuous source measure to a discrete target measure under a parametrised cost. In our setting, one may heuristically interpret the labelled pixels as samples from such an optimal partition and the optimisation over PMD parameters as learning, within a restricted parametrised class, both the effective cost and the induced target masses from these samples. This is closely related in spirit to inverse optimal transport, where one seeks to infer the ground cost from samples of an observed coupling or matching
\cite{LiYeZhouZha2019,ChiuWangShafto2022,AndradePeyrePoon2024,Poon2025notes}. Making this connection precise, and clarifying what may be gained from the inverse-transport viewpoint both theoretically and computationally, would be an interesting direction for future work.}

\bibliographystyle{siamplain}
\bibliography{Bibliography}

\end{document}